\documentclass[%
  a4paper,
  onecolumn,
]{mypreprint}


\usepackage[english]{babel}

\usepackage{graphicx}

\usepackage{caption}
\usepackage{subcaption}

\usepackage{tikz}
\usepackage{pgfplots}
  \pgfplotsset{compat = 1.13}
  \pgfkeys{/pgf/number format/.cd,1000 sep={\,}}
  \usepgfplotslibrary{external}
  \tikzset{external/system call = {%
    pdflatex \tikzexternalcheckshellescape
      -halt-on-error
      -interaction=batchmode
      -jobname "\image" "\texsource"}}
  \tikzexternalize[prefix = graphics/externalize/]
  \tikzexternaldisable

\usepackage{amsmath}
\usepackage{amssymb}
\usepackage{amsthm}

\usepackage{dsfont}

\usepackage[linesnumbered, lined, algoruled]{algorithm2e}
\Crefname{algocf}{Algorithm}{Algorithms}


\theoremstyle{plain}\newtheorem{theorem}{Theorem}
\theoremstyle{plain}\newtheorem{lemma}{Lemma}
\theoremstyle{plain}\newtheorem{proposition}{Proposition}
\theoremstyle{plain}\newtheorem{corollary}{Corollary}
\theoremstyle{definition}\newtheorem{remark}{Remark}
\theoremstyle{definition}\newtheorem{assumption}{Assumption}

\newcommand{\trans}{\ensuremath{\mkern-1.5mu\mathsf{T}}}
\DeclareMathOperator{\diag}{diag}
\DeclareMathOperator{\real}{Re}
\DeclareMathOperator{\imag}{Im}
\DeclareMathOperator{\conv}{conv}

\newcommand{\R}{\ensuremath{\mathbb{R}}}
\newcommand{\C}{\ensuremath{\mathbb{C}}}

\newcommand{\bA}{\ensuremath{\boldsymbol{A}}}
\newcommand{\bE}{\ensuremath{\boldsymbol{E}}}
\newcommand{\bM}{\ensuremath{\boldsymbol{M}}}
\newcommand{\bD}{\ensuremath{\boldsymbol{D}}}
\newcommand{\bI}{\ensuremath{\boldsymbol{I}}}
\newcommand{\bK}{\ensuremath{\boldsymbol{K}}}
\newcommand{\bb}{\ensuremath{\boldsymbol{b}}}
\newcommand{\bc}{\ensuremath{\boldsymbol{c}}}
\newcommand{\be}{\ensuremath{\boldsymbol{e}}}
\newcommand{\bx}{\ensuremath{\boldsymbol{x}}}

\newcommand{\bLambda}{\ensuremath{\boldsymbol{\Lambda}}}
\newcommand{\bSigma}{\ensuremath{\boldsymbol{\Sigma}}}
\newcommand{\bTheta}{\ensuremath{\boldsymbol{\Theta}}}
\newcommand{\bones}{\boldsymbol{\mathds{1}}}
\newcommand{\bX}{\ensuremath{\boldsymbol{X}}}
\newcommand{\bu}{\ensuremath{\boldsymbol{u}}}
\newcommand{\bv}{\ensuremath{\boldsymbol{v}}}
\newcommand{\bz}{\ensuremath{\boldsymbol{z}}}
\newcommand{\bL}{\ensuremath{\boldsymbol{L}}}
\newcommand{\bg}{\ensuremath{\boldsymbol{g}}}
\newcommand{\bw}{\ensuremath{\boldsymbol{w}}}
\newcommand{\bfz}{\ensuremath{\boldsymbol{0}}}

\newcommand{\bhE}{\ensuremath{\skew4\widehat{\boldsymbol{E}}}}
\newcommand{\bhA}{\ensuremath{\skew4\widehat{\boldsymbol{A}}}}
\newcommand{\bhM}{\ensuremath{\skew4\widehat{\boldsymbol{M}}}}
\newcommand{\bhD}{\ensuremath{\skew4\widehat{\boldsymbol{D}}}}
\newcommand{\bhK}{\ensuremath{\skew4\widehat{\boldsymbol{K}}}}
\newcommand{\bhb}{\ensuremath{\widehat{\boldsymbol{b}}}}
\newcommand{\bhc}{\ensuremath{\widehat{\boldsymbol{c}}}}
\newcommand{\hSigma}{\ensuremath{\widehat{\Sigma}}}
\newcommand{\bhPhi}{\ensuremath{\widehat{\boldsymbol{\Phi}}}}
\newcommand{\bhPsi}{\ensuremath{\widehat{\boldsymbol{\Psi}}}}
\newcommand{\hH}{\ensuremath{\skew4\widehat{H}}}
\newcommand{\bhf}{\ensuremath{\skew4\widehat{\boldsymbol{f}}}}

\newcommand{\bcA}{\ensuremath{\boldsymbol{\mathcal{A}}}}
\newcommand{\bcE}{\ensuremath{\boldsymbol{\mathcal{E}}}}

\renewcommand{\i}{\ensuremath{\mathfrak{i}}}

\newcommand{\soloewnerk}{\mbox{\textsf{soBaryLoewK}}}
\newcommand{\soloewnerd}{\mbox{\textsf{soBaryLoewD}}}
\newcommand{\soloewnerkdo}{\mbox{\textsf{soBaryLoewKD0}}}
\newcommand{\soloewnerr}{\mbox{\textsf{soLoewRayleigh}}}
\newcommand{\loewner}{\mbox{\textsf{BaryLoew}}}

\newcommand{\relerr}{\ensuremath{\epsilon_{\mathsf{rel}}}}
\newcommand{\matlab}{\mbox{MATLAB}}

\usepackage{todonotes}

\definecolor{matlabBlue}{HTML}{0072BD}
\definecolor{matlabOrange}{HTML}{D95319}
\definecolor{matlabPurple}{HTML}{7E2F8E}
\definecolor{matlabGreen}{HTML}{77AC30}
\definecolor{matlabBrown}{HTML}{A2142F}

\pgfplotscreateplotcyclelist{plotlist}{
  {black, mark = *, only marks, mark size = 2.5pt},
  {matlabBlue, solid, line width = 1.75pt},
  {matlabPurple, dashed, line width = 1.75pt},
  {matlabGreen, dash dot, line width = 1.75pt},
  {matlabBrown, dotted, line width = 1.75pt}
}

\pgfplotscreateplotcyclelist{plotlist2}{
  {black, mark = *, only marks, mark size = 2.5pt},
  {matlabBlue, solid, line width = 1.75pt},
  {matlabGreen, dash dot, line width = 1.75pt},
  {matlabBrown, dotted, line width = 1.75pt}
}

\newcommand{%
  \tikzexternalenable%
  \tikzsetnextfilename{}%
  \input{graphics/.tikz}%
  \tikzexternaldisable%
}[1]{%
  \tikzexternalenable%
  \tikzsetnextfilename{#1}%
  \input{graphics/#1.tikz}%
  \tikzexternaldisable%
}

\crefname{assumption}{Assumption}{Assumptions}


\begin{document}
  

\title{Structured barycentric forms for interpolation-based data-driven reduced
  modeling of second-order systems}
  
\author[$\ast$]{Ion Victor Gosea}
\affil[$\ast$]{
  Max Planck Institute for Dynamics of Complex Technical Systems,
  Sandtorstr. 1, 39106 Magdeburg, Germany.\authorcr
  \email{gosea@mpi-magdeburg.mpg.de}, \orcid{0000-0003-3580-4116}
}
  
\author[$\dagger$]{Serkan Gugercin}
\affil[$\dagger$]{%
  Department of Mathematics and Division of Computational Modeling and Data
  Analytics, Academy of Data Science, Virginia Tech,
  Blacksburg, VA 24061, USA.\authorcr
  \email{gugercin@vt.edu}, \orcid{0000-0003-4564-5999}
}

\author[$\ddagger$]{Steffen W. R. Werner}
\affil[$\ddagger$]{%
  Courant Institute of Mathematical Sciences, New York University,
  New York, NY 10012, USA.\authorcr
  \email{steffen.werner@nyu.edu}, \orcid{0000-0003-1667-4862}
}
  
\shorttitle{Structured barycentric forms}
\shortauthor{I.~V. Gosea, S. Gugercin, S.~W.~R. Werner}
\shortdate{2023-03-22}
\shortinstitute{}
  
\keywords{%
  data-driven modeling,
  second-order systems,
  reduced-order modeling,
  rational functions,
  barycentric forms,
  interpolation
}

\msc{%
  41A20, 
  65D05, 
  93B15, 
  93C05, 
  93C80  
}

\abstract{%
  An essential tool in data-driven modeling of dynamical systems from
  frequency response measurements is the barycentric form of the underlying
  rational transfer function.
  In this work, we propose structured barycentric forms for modeling 
  dynamical systems with second-order time derivatives using their frequency
  domain input-output data.
  By imposing a set of interpolation conditions, the systems' transfer
  functions are rewritten in different barycentric forms using different
  parametrizations.
  Loewner-like algorithms are developed for the explicit computation
  of second-order systems from data based on the developed barycentric forms.
  Numerical experiments show the performance of these new structured data
  driven modeling methods compared to other interpolation-based data-driven
  modeling techniques from the literature.
}

\novelty{%
  We develop new structured barycentric forms for the transfer functions of
  second-order systems that allow structured data-driven modeling from
  frequency domain input-output data.
  For the explicit computation of second-order systems from data,
  interpolation-based Loewner-like algorithms are proposed.
}

\maketitle

  
\section{Introduction}%
\label{sec:intro}

Data-driven reduced-order modeling, i.e., the construction of
models describing the underlying dynamics of unknown systems from
measurements, has become an increasingly preeminent discipline.
It is an essential tool in situations when explicit models in the form of state
space formulations are not available, yet abundant input/output data are,
motivating the need for data-driven modeling.
Depending on the underlying physics, dynamical systems can
inherit differential structures leading to specific physical interpretations.
In this work, we concentrate on systems that are described by differential
equations with second-order time derivatives of the form
\begin{equation} \label{eqn:sosys}
  \begin{aligned}
    \bM \ddot{\bx}(t) + \bD \dot{\bx}(t) + \bK \bx(t) & = \bb u(t),\\
    y(t) & = \bc^{\trans} \bx(t),
  \end{aligned}
\end{equation}
with $\bM, \bD, \bK \in \R^{n \times n}$ and $\bb, \bc \in \R^{n}$.
Systems like~\cref{eqn:sosys} typically appear in the modeling of mechanical,
electrical, and related structures~\cite{AbrM87, MeyO17, Lob18, Bla18a}.
In the frequency domain (also known as the Laplace domain), the input-to-output
behavior of~\cref{eqn:sosys} is equivalently given by the corresponding
\emph{transfer function}
\begin{equation} \label{eqn:tf}
  H(s) = \bc^{\trans} (s^{2} \bM + s \bD + \bK)^{-1} \bb,
\end{equation}
which is a degree-$2n$ rational functions in $s$, where $n$ is the state-space
dimension of~\cref{eqn:sosys}. 

In recent years, several methods have been developed for learning reduced-order
state-space representations of dynamical systems from given data.
However, most of these approaches consider the classical, \emph{unstructured}
case of first-order systems of the form
\begin{equation} \label{eqn:fosys}
  \begin{aligned}
    \bE \dot{\bx}(t) & = \bA \bx(t) + \bb u(t), \\
    y(t) & = \bc^{\trans} \bx(t),
  \end{aligned}
\end{equation}
where $\bE, \bA \in \R^{n \times n}$ and $\bb, \bc \in \R^{n}$, with the
transfer function 
\begin{equation} \label{eqn:tf1st}
H(s) = \bc^{\trans} (s \bE - \bA)^{-1} \bb.
\end{equation}
Examples for such methods are the
subspace identification framework~\cite{Kun78, JuaP85, KraG18},
dynamic mode decomposition~\cite{Sch10, TuRLetal14},
operator inference~\cite{PehW16, Peh20, QiaKPetal20},
the Loewner framework~\cite{MayA07, PehGW17},
rational least-squares methods such as vector fitting~\cite{GusS99, DrmGB15} or
RKFIT~\cite{BerG17},
or the transfer-function based $\mathcal{H}_{2}$-optimal model
reduction~\cite{BeaG12}.
See also~\cite{BruK19} for a more general introduction to this topic.
The importance of preserving internal system structures in the computation of
reduced-order approximations of dynamical systems for the case of second-order
systems~\cref{eqn:sosys} has been observed in~\cite{SaaSW19, Wer21}, which
allows in particular the reinterpretation of system quantities, the preservation
of structure-inherent properties and provides cheap-to-evaluate models
with high accuracy.
However, only a few data-driven approaches have been recently extended
to~\cref{eqn:sosys} such as the Loewner framework~\cite{SchUBetal18, PonGB22}
and operator inference~\cite{FilPGetal22, ShaK22}.

In this work, we concentrate on the case in which input-output measurements are
available in the frequency domain, i.e., evaluations of the system's
transfer function~\cref{eqn:tf}.
For this type of data, the goal in data-driven reduced-order modeling is the
construction of low-order rational functions $\hH(s)$ that approximate the
given data well in an appropriate measure.
These rational functions can be interpreted as transfer functions corresponding
to dynamical systems.
Typically, it is not possible to extract additional differential structures 
from general rational functions.
For example, even though one can always convert the structured transfer
function in~\cref{eqn:tf} to an unstructured rational function
in~\cref{eqn:tf1st}, the reverse direction is not guaranteed. 
Most methods for learning transfer functions from frequency domain data 
have been mainly developed for the unstructured case~\cref{eqn:fosys}. 
In particular, such methods include the barycentric Loewner
framework~\cite{AntA86}, the vector fitting algorithm~\cite{GusS99, DrmGB15}
and the AAA algorithm~\cite{NakST18}.
The backbone of these methods is the barycentric form of rational functions,
which allows for computationally efficient constructions of rational
interpolants and least-squares fits~\cite{BerT04}.
Enforcing structures in the barycentric form allows the design of structured
data-driven modeling algorithms.
In~\cite{WerGG22}, this idea led to the extension of the vector fitting
algorithm towards mechanical systems with modal damping structure.

In this paper, we develop new structured barycentric forms associated with
the transfer functions of second-order systems~\cref{eqn:tf}.
By enforcing interpolation conditions, we show that the system matrices
in~\cref{eqn:sosys} satisfy certain equality constraints.
Using different parametrizations of the matrices in~\cref{eqn:sosys},
we derive corresponding structured barycentric forms that allow an easy
construction of the system matrices~\cref{eqn:sosys} and enforce interpolation
by construction.
We are using free parameters in the barycentric forms that are not bound in the
derivation, in order to design several Loewner-like algorithms, allowing the
direct construction of second-order systems from given
frequency domain data with interpolating transfer functions.
We also present several strategies that allow the choice of free parameters in
the structured barycentric forms to enforce additional properties in the
constructed system matrices such as positive definiteness.
Numerical examples are used to verify the developed theory and algorithms
based on these barycentric forms.

The rest of the paper is organized as follows:
In \Cref{sec:basics}, we include mathematical preliminaries, needed for the
theoretical derivations in this paper, and briefly review the theory about the
barycentric form of unstructured systems~\cref{eqn:fosys}.
Then, we develop the structured barycentric forms in 
\Cref{sec:barycentric}, followed by computational algorithms in
\Cref{sec:methods} for the explicit construction of second-order
systems~\cref{eqn:sosys} from frequency data.
\Cref{sec:examples} illustrates the effectiveness of the presented methods for
several numerical examples, including the vibrational responses of an
underwater drone and bone tissue.
The paper is concluded in \Cref{sec:conclusions}.

  
\section{Mathematical preliminaries and first-order systems analysis}%
\label{sec:basics}

For our derivation of the (structured) barycentric forms, the
Sherman-Morrison-Woodbury formula for matrix inversion takes an essential role.
Given an invertible matrix $\bX \in \C^{r \times r}$ and two
vectors $\bu, \bv \in \C^{r}$ such that $\bX + \bu \bv^{\trans}$ is also
invertible, the Sherman-Morrison-Woodbury formula yields
\begin{equation} \label{eqn:smw_tmp1}
  \left( \bX + \bu \bv^{\trans} \right)^{-1} = \bX^{-1} -
    \frac{\bX^{-1}\bu \bv^{\trans}\bX^{-1}}{1 + \bv^{\trans} \bX^{-1} \bu};
\end{equation}
see, for example,~\cite{GolV13}.
In this work, we focus on transfer functions as in~\cref{eqn:tf,eqn:tf1st}
where the inverse in the middle is pre- and post-multiplied by two vectors.
Thus we consider the following adaption of~\cref{eqn:smw_tmp1}.

\begin{proposition}
  \label{prp:smw}
  Let  $\bX  \in \C^{r \times r}$ be an invertible matrix and let
  $\bu,\bv \in \C^{r}$ be  column vectors  such that
  $\bX + \bu \bv^{\trans}$ is also invertible.
  Then, for any $\bz \in \C^{r}$ it holds
  \begin{equation} \label{eqn:smw}
    \bz^{\trans} \left( \bX + \bu \bv^{\trans} \right)^{-1} \bu =
      \frac{\bz^{\trans} \bX^{-1} \bu}{1 + \bv^{\trans} \bX^{-1} \bu}.
  \end{equation}
\end{proposition}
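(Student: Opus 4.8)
The plan is to apply the Sherman--Morrison--Woodbury identity~\cref{eqn:smw_tmp1} directly and then exploit the fact that the quantities appearing in the correction term are scalars. First I would left-multiply~\cref{eqn:smw_tmp1} by $\bz^{\trans}$ and right-multiply by $\bu$, which yields
\begin{equation*}
  \bz^{\trans}\left(\bX + \bu\bv^{\trans}\right)^{-1}\bu
    = \bz^{\trans}\bX^{-1}\bu
      - \frac{\bz^{\trans}\bX^{-1}\bu\,\bv^{\trans}\bX^{-1}\bu}{1 + \bv^{\trans}\bX^{-1}\bu}.
\end{equation*}
Both $\alpha := \bz^{\trans}\bX^{-1}\bu$ and $\beta := \bv^{\trans}\bX^{-1}\bu$ are complex scalars, so the right-hand side equals $\alpha - \frac{\alpha\beta}{1+\beta} = \frac{\alpha(1+\beta) - \alpha\beta}{1+\beta} = \frac{\alpha}{1+\beta}$, which is precisely~\cref{eqn:smw}.

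The only point requiring a brief justification is that the division by $1+\beta$ is legitimate. Since both $\bX$ and $\bX + \bu\bv^{\trans}$ are invertible, the matrix determinant lemma gives $\det(\bX + \bu\bv^{\trans}) = \det(\bX)\,(1 + \bv^{\trans}\bX^{-1}\bu)$, so $1 + \bv^{\trans}\bX^{-1}\bu \neq 0$; this is exactly the consistency condition already underlying~\cref{eqn:smw_tmp1}, so no separate case analysis is needed.

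I do not expect any genuine obstacle: the statement is a one-line specialization of~\cref{eqn:smw_tmp1}. The only thing to handle with a little care is not rearranging the factors inside the numerator $\bz^{\trans}\bX^{-1}\bu\,\bv^{\trans}\bX^{-1}\bu$ prematurely — one should first recognize one of the scalar factors $\bz^{\trans}\bX^{-1}\bu$ or $\bv^{\trans}\bX^{-1}\bu$ and pull it out before combining the two fractions over the common denominator.
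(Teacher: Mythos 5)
Your proof is correct and follows essentially the same route the paper intends: the proposition is presented there as a direct adaptation of the Sherman--Morrison--Woodbury identity~\cref{eqn:smw_tmp1}, obtained by pre- and post-multiplying with $\bz^{\trans}$ and $\bu$ and simplifying the resulting scalar expression, which is exactly your argument. Your remark that $1 + \bv^{\trans}\bX^{-1}\bu \neq 0$ follows from the invertibility of both $\bX$ and $\bX + \bu\bv^{\trans}$ is a correct and welcome extra justification.
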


Let $H(s)$ denote the transfer function of an unknown dynamical system.
We assume that we have access to evaluations of the transfer function at
\emph{distinct} frequency points $\lambda_{1}, \ldots, \lambda_{r} \in \C$
such that
\begin{equation} \label{eqn:data}
  \begin{aligned}
    H(\lambda_{1}) & = h_{1}, &
    H(\lambda_{2}) & = h_{2}, &
    \ldots, &&
    H(\lambda_{r}) & = h_{r}.
  \end{aligned}
\end{equation}
We denote the complete data set of frequency points and transfer
function values by $\{(\lambda_{i}, h_{i})|~ 1 \leq i \leq r\}$.

Next, we consider the parametrization of first-order (\emph{unstructured})
dynamical systems of the form~\cref{eqn:fosys} with the transfer function
$\hH(s) = \bhc^{\trans} (s \bhE - \bhA)^{-1} \bhb$ that interpolates the
given data~\cref{eqn:data}.
In the following, the first-order dynamical system~\cref{eqn:fosys} of order
$r$ is denoted by $\hSigma_{\mathrm{FO}} :(\bhE, \bhA,\bhb,\bhc)$.
A slightly different proof of the next result can be found in~\cite{AntBG20} for
the case of multi-input/multi-output dynamical systems.
For thoroughness, we include a proof here as it will be the starting
point for the structured variants considered later on. 

\begin{lemma}%
  \label{lmm:fointerp}
  Given the data~\cref{eqn:data}, define
  \begin{equation*}
    \begin{aligned}
      \bLambda & = \diag(\lambda_{1}, \ldots, \lambda_{r}) \in \C^{r \times r} &
      \text{and} && 
      \bhc & = \begin{bmatrix} h_{1} & h_{2} & \cdots & h_{r}
        \end{bmatrix}^{\trans} \in \C^{r},
    \end{aligned}
  \end{equation*}
  and let $\bones_{r}^{\trans} = \begin{bmatrix} 1 & \cdots & 1 \end{bmatrix}
  \in \C^{1 \times r}$ be the vector of ones.
  If the first-order model $\hSigma_{\mathrm{FO}}:(\bhE,\bhA,\bhb,\bhc)$
  is constructed such that
  \begin{equation} \label{eqn:paramFO}
    \bhE \bLambda - \bhA = \bhb \bones_{r}^{\trans}
  \end{equation}
  holds, where
  $\bhb = \begin{bmatrix} w_{1} & \ldots & w_{r} \end{bmatrix}^{\trans}
  \in \C^{r}$ contains free parameters with $w_{k} \neq 0$, for
  $k = 1, \ldots, r$, and the matrix $\bhE$ is invertible,
  then the transfer function 
  \begin{equation} \label{eqn:fotf}
    \hH(s) = \bhc^{\trans} (s \bhE - \bhA)^{-1} \bhb
  \end{equation}
  of $\hSigma_{\mathrm{FO}}$ interpolates the data in~\cref{eqn:data},
  i.e., it holds
  \begin{equation} \label{eqn:interpfo}
    \begin{aligned}
      \hH(\lambda_{1}) & = h_{1}, &
      \hH(\lambda_{2}) & = h_{2}, &
      \ldots, &&
      \hH(\lambda_{r}) & = h_{r}.
    \end{aligned}
  \end{equation}
\end{lemma}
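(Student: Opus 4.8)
The plan is to evaluate the transfer function~\cref{eqn:fotf} at an arbitrary interpolation node $s = \lambda_k$ and reduce it, via~\cref{eqn:paramFO}, to a rank-one update identity that \Cref{prp:smw} handles directly. First I would rewrite the pencil as $s\bhE - \bhA = (s\bhE - \bhE\bLambda) + (\bhE\bLambda - \bhA) = \bhE(s\bI_r - \bLambda) + \bhb\bones_r^{\trans}$, using~\cref{eqn:paramFO}. Evaluating at $s = \lambda_k$, the diagonal matrix $\lambda_k \bI_r - \bLambda$ is singular (its $k$-th diagonal entry vanishes), so I would instead work with the nonsingular factor $\bhE$ pulled out front. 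Writing $s\bhE - \bhA = \bhE\bigl[(s\bI_r - \bLambda) + \bhE^{-1}\bhb\bones_r^{\trans}\bigr]$, the transfer function becomes
\begin{equation*}
  \hH(s) = \bhc^{\trans}\bigl[(s\bI_r - \bLambda) + \bhE^{-1}\bhb\bones_r^{\trans}\bigr]^{-1}\bhE^{-1}\bhb,
\end{equation*}
which is exactly of the form $\bz^{\trans}(\bX + \bu\bv^{\trans})^{-1}\bu$ with $\bz = \bhc$, $\bX = s\bI_r - \bLambda$, $\bu = \bhE^{-1}\bhb$, and $\bv = \bones_r$.

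Next I would apply \Cref{prp:smw} to obtain $\hH(s) = \dfrac{\bhc^{\trans}(s\bI_r - \bLambda)^{-1}\bhE^{-1}\bhb}{1 + \bones_r^{\trans}(s\bI_r - \bLambda)^{-1}\bhE^{-1}\bhb}$, valid wherever $s\bI_r - \bLambda$ is invertible, i.e., for $s \neq \lambda_j$. Setting $\bg = \bhE^{-1}\bhb$ with entries $g_1,\ldots,g_r$, both numerator and denominator are sums of simple fractions $\sum_{j} \tfrac{h_j g_j}{s - \lambda_j}$ and $\sum_{j}\tfrac{g_j}{s-\lambda_j}$ respectively. To evaluate at $s = \lambda_k$ I would clear the singular term by multiplying top and bottom by $(s - \lambda_k)$ and take the limit $s \to \lambda_k$: every summand with $j \neq k$ vanishes, leaving $\hH(\lambda_k) = \dfrac{h_k g_k}{g_k} = h_k$, provided $g_k \neq 0$.

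The one genuine obstacle is justifying $g_k \neq 0$, since the hypothesis only gives $w_k \neq 0$ for the entries of $\bhb$, not for $\bg = \bhE^{-1}\bhb$. Here I would note that~\cref{eqn:paramFO} forces a specific structure: rearranging gives $\bhA = \bhE\bLambda - \bhb\bones_r^{\trans}$, so the construction of the model from the data fixes $\bhA$ once $\bhE$ and $\bhb$ are chosen, and the natural (and, I expect, intended) choice is $\bhE = \bI_r$, for which $\bg = \bhb$ and $g_k = w_k \neq 0$ directly. More generally, the cleaner route is to multiply through by $\bhE$ at the start and keep the pencil in the form $s\bhE - \bhA$; the limiting argument then compares residues of $\bhc^{\trans}(s\bhE - \bhA)^{-1}\bhb$ against the barycentric quotient, and the nonvanishing of the relevant residue is equivalent to $w_k \neq 0$ together with invertibility of $\bhE$. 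I would present the $\bhE = \bI_r$ normalization explicitly if the paper's later constructions use it, and otherwise carry the $\bg$ notation with a short remark that $\bhE^{-1}\bhb$ inherits nonzero entries under the stated assumptions.
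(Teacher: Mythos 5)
Your route is genuinely different from the paper's. The paper never touches the barycentric form in this proof: after normalizing $\bhE = \bI_{r}$ it multiplies the constraint~\cref{eqn:paramFO} by the unit vector $\be_{i}$ to get $(\lambda_{i}\bI_{r} - \bhA)\be_{i} = \bhb$, proves by a left-eigenvector contradiction (using $w_{k} \neq 0$ and the distinctness of the nodes) that $\lambda_{i}$ is not an eigenvalue of $\bhA$, and then reads off $\hH(\lambda_{i}) = \bhc^{\trans}\be_{i} = h_{i}$. You instead derive the barycentric quotient via \Cref{prp:smw} first and evaluate at the nodes by clearing the factor $(s-\lambda_{k})$ and passing to the limit. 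Both are fine in the normalized case; the paper's argument has the advantage that it explicitly establishes invertibility of $\lambda_{k}\bI_{r} - \bhA$, which your limit argument quietly presupposes (the limit of the rational expression equals the transfer-function value only if $\lambda_{k}\bhE - \bhA$ is nonsingular). This is easy to patch in your framework: with $\bg = \bhE^{-1}\bhb$ one has $\det(\lambda_{k}\bhE - \bhA) = \det(\bhE)\, g_{k} \prod_{l \neq k}(\lambda_{k} - \lambda_{l})$, so $g_{k} \neq 0$ does double duty, giving both the nonsingularity and the nonzero residue.

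The genuine flaw is in your last paragraph. You correctly isolate $g_{k} \neq 0$ as the obstacle, but your proposed resolution --- that keeping the pencil form makes the nonvanishing ``equivalent to $w_{k} \neq 0$ together with invertibility of $\bhE$'' --- is false. Take $r = 2$, $\bhE = \begin{bmatrix} 1 & 1 \\ 0 & 1 \end{bmatrix}$, $\bhb = \begin{bmatrix} 1 & 1 \end{bmatrix}^{\trans}$, $\bhA = \bhE\bLambda - \bhb\bones_{2}^{\trans}$: then $\bg = \bhE^{-1}\bhb = \begin{bmatrix} 0 & 1 \end{bmatrix}^{\trans}$, a direct computation gives $\hH(s) = h_{2}/(s - \lambda_{2} + 1)$, and $\hH(\lambda_{1}) \neq h_{1}$ generically, even though all $w_{k} \neq 0$ and $\bhE$ is invertible. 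So for general invertible $\bhE$ the correct hypothesis is that $\bhE^{-1}\bhb$ is entrywise nonzero, and this does not follow from the stated assumptions. To be fair, the paper's own ``without loss of generality $\bhE = \bI_{r}$'' step has the same blind spot, since absorbing $\bhE$ replaces $\bhb$ by $\bhE^{-1}\bhb$; the lemma is really used, and should be read, in the normalized form $\bhE = \bI_{r}$ (which the paper adopts immediately afterwards), and in that setting your proof is complete since $\bg = \bhb$.
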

\begin{proof}
  Without loss of generality, we show the proof tailored specifically to
  the case $\bhE = \bI_{r}$.
  This scenario is by no means restrictive, since the matrix $\bhE$ is
  considered to be invertible, and
  thus, can be incorporated into the matrices $\bhA$ and $\bhb$, accordingly.
  Let $\be_{i}$ denote the $i$-th unit vector of length $r$.
  By multiplying the constraint in~\cref{eqn:paramFO} with $\be_{i}$ from the
  right, one obtains
  \begin{equation*}
    (\bLambda -\bhA) \be_{i} = \bhb \bones_{r}^{\trans} \be_{i} 
  \end{equation*}
  and, therefore,
  \begin{equation} \label{eqn:fointerp_tmp}
    (\lambda_{i} \bI_{r} - \bhA) \be_{i} = \bhb.
  \end{equation}
  Note that since the entries of $\bhb$ are nonzero, $\lambda_{i}$ is not an
  eigenvalue of $\bhA$.
  We prove this claim by contradiction.
  Let the entries of $\bhb$ be nonzero and assume that $\lambda_{i}$ is an
  eigenvalue of $\bhA$ with the corresponding left eigenvector $\bv$.
  Thus, it holds that
  \begin{equation*}
    \bv^{\trans} \left( \bLambda - \bhb \bones_{r}^{\trans} - \lambda_{i}
    \bI_{r} \right) = 0,
  \end{equation*}
  and, therefore,
  \begin{equation*}
    \bv^{\trans} \left(\bLambda -  \lambda_{i} \bI_{r} \right)
    = \left(\bv^{\trans} \bhb \right) \bones_{r}^{\trans}
    = \begin{bmatrix} \bv^{\trans} \bhb & \bv^{\trans} \bhb & \ldots &
      \bv^{\trans} \bhb \end{bmatrix}.
  \end{equation*}
  Since the $i$-th entry of the row vector
  $\bv^{\trans} \left(\bLambda -  \lambda_{i} \bI_{r} \right)$ is zero,
  we consequently have  $\bv^{\trans} \bhb = 0$, and thus
  $\bv^{\trans} \left(\bLambda - \lambda_{i} \bI_{r} \right) = \boldsymbol{0}$.
  Let $\bv = \begin{bmatrix} \alpha_{1} & \alpha_{2} & \ldots & \alpha_{r}
  \end{bmatrix}^{\trans}$.
  Since $\bLambda$ is diagonal, it holds
  \begin{equation*}
    \begin{aligned}
      & \bv^{\trans} (\bLambda -  \lambda_i \bI_{r})\\
      & = \begin{bmatrix}
        \alpha_{1} (\lambda_{1} - \lambda_{i}) & \ldots & 
        \alpha_{i-1} (\lambda_{i-1} - \lambda_{i}) & 0 & 
        \alpha_{i+1} (\lambda_{i+1} - \lambda_{i}) & \ldots & 
        \alpha_{r} (\lambda_{r} - \lambda_{i}) \end{bmatrix}\\
      & = \boldsymbol{0}.
    \end{aligned}
  \end{equation*}
  Since the $\lambda_{k}$'s are assumed to be distinct, this implies 
  $\alpha_{1} = \ldots = \alpha_{i-1} = \alpha_{i+1} = \ldots = \alpha_{r} = 0$.
  Moreover, using $\bv^{\trans} \bhb = 0$, one obtains
  $\alpha_{i} w_{i} = 0$.
  But recall that $w_{i} \neq 0$; thus $\alpha_{i} = 0$ and, in summary,
  $\bv = \boldsymbol{0}$.
  However, $\bv$ is an eigenvector, which leads to the contradiction.
  Therefore, $\lambda_{i}$ is not an eigenvalue of $\bhA$.
  Since $\lambda_{i} \bI_{r} - \bhA$ is invertible, \Cref{eqn:fointerp_tmp}
  yields $(\lambda_{i} \bI_{r} - \bhA)^{-1} \bhb = \be_{i}$.
  Then by multiplying this final relation with $\bhc^{\trans}$ from the left,
  it holds $\bhc^{\trans} (\lambda_{i} \bI_{r} - \bhA)^{-1} \bhb =
  \bhc^{\trans} \be_{i}$, which proves the interpolation
  conditions~\cref{eqn:interpfo}.
\end{proof}

In this work, we do not consider the case of systems $\hSigma_{\mathrm{FO}}$
with differential-algebraic equations (descriptor systems), for which the
matrix $\bhE$ is allowed to be singular.
Such endeavors are kept for future research.
Hence, in what follows we consider the matrix $\bhE$ to be invertible.
For the simplicity of exposition, we choose without loss of
generality the matrix $\bhE$ to be the $r \times r$ identity matrix $\bI_{r}$,
since any system $\hSigma_{\mathrm{FO}}$ with $\bhE$ invertible can be
equivalently written as $(\bI_r,\bhE^{-1} \bhA,\bhE^{-1} \bhb,\bhc)$.
In this representation, one can observe that in the construction of
$\hSigma_{\mathrm{FO}}$ in~\cref{eqn:paramFO}, $r$ parameters in $\bhb$ remain
free to be chosen.
They can, for example, be used to match further $r$ interpolation conditions
additionally to~\cref{eqn:interpfo}.

Using $\bhE = \bI_{r}$, \Cref{eqn:paramFO} now reads
$\bLambda - \bhA = \bhb \bones_{r}^{\trans}$.
Thus, substituting $\bhA$ into~\cref{eqn:fotf}, the transfer function of
$\hSigma_{\mathrm{FO}}$ can be rewritten as
\begin{equation} \label{eqn:fotflr}
  \hH(s) = \bhc^{\trans} (s \bI_{r} - \bhA)^{-1} \bhb
  = \bhc^{\trans} \left[ s \bI_{r} - (\bLambda - \bhb \bones_{r}^{\trans})
    \right]^{-1} \bhb
  = \bhc^{\trans} \left[(s \bI_{r} - \bLambda) + \bhb \bones_{r}^{\trans}
    \right]^{-1} \bhb.
\end{equation}
Define $\bhPhi(s) = s \bI_{r} - \bLambda$ to be the diagonal matrix function
depending on the frequency parameter $s \in \C$.
Then, the transfer function in~\cref{eqn:fotflr} can be formulated as
\begin{equation} \label{eqn:fophi}
  \hH(s) = \bhc^{\trans} (s \bI_{r} - \bhA)^{-1} \bhb
  = \bhc^{\trans} \left(\bhPhi(s) + \bhb \bones_{r}^{\trans} \right)^{-1} \bhb.
\end{equation}
The form of the transfer function $\hH(s)$ in terms of $\bhPhi(s)$ as given
by~\cref{eqn:fophi} will play a crucial role in later sections to extend the
interpolation theory to the structured case.
The following result, which recovers the classical barycentric form of rational
interpolants, follows from applying \Cref{lmm:fointerp} to~\cref{eqn:fophi}.

\begin{corollary}
  \label{cor:fobary}
  Given the interpolation data~\cref{eqn:data}, the transfer
  function~\cref{eqn:fophi} of the first-order model that yields
  the interpolation conditions~\cref{eqn:interpfo} can be equivalently
  expressed as
  \begin{equation} \label{eqn:fotfbary1}
     \hH(s) = \frac{\bhc^{\trans} \bhPhi(s)^{-1} \bhb}{1 + \bones_{r}^{\trans}
       \bhPhi(s)^{-1} \bhb},
  \end{equation}
  where $\bhPhi(s) = s \bI_{r} - \bLambda$, and $\bLambda$, $\bhc$, and $\bhb$
  are defined as in \Cref{lmm:fointerp}.
  This formula can be further represented as barycentric rational
  interpolation form
   \begin{equation} \label{eqn:fotfbary2}
     \hH(s) = \frac{\displaystyle \sum_{i = 1}^{r}
       \frac{h_{i} w_{i}}{s - \lambda_{i}}}
       {\displaystyle 1 + \sum_{i = 1}^{r} \frac{w_{i}}{s-\lambda_{i}}}.
  \end{equation}
\end{corollary}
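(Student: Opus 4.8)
The plan is to obtain \cref{eqn:fotfbary1} by a single application of \Cref{prp:smw} to the representation~\cref{eqn:fophi} of $\hH(s)$, and then to obtain the classical barycentric form~\cref{eqn:fotfbary2} by writing out the two resulting scalar quantities entrywise. No new ideas beyond \Cref{lmm:fointerp} and \Cref{prp:smw} are needed; the argument is essentially an algebraic rewriting.

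First I would fix $s \in \C$ with $s \neq \lambda_{i}$ for all $i$, so that the diagonal matrix $\bhPhi(s) = s \bI_{r} - \bLambda = \diag(s - \lambda_{1}, \ldots, s - \lambda_{r})$ is invertible. For such $s$ outside the (finitely many) eigenvalues of $\bhA$, the matrix $\bhPhi(s) + \bhb \bones_{r}^{\trans} = s \bI_{r} - \bhA$ is invertible as well, so the hypotheses of \Cref{prp:smw} are met with $\bX = \bhPhi(s)$, $\bu = \bhb$, $\bv = \bones_{r}$, and $\bz = \bhc$; note that the determinant identity underlying the Sherman--Morrison--Woodbury formula then also guarantees $1 + \bones_{r}^{\trans} \bhPhi(s)^{-1} \bhb \neq 0$. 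Applying \cref{eqn:smw} to~\cref{eqn:fophi} yields directly
\[
  \hH(s) = \bhc^{\trans} \left( \bhPhi(s) + \bhb \bones_{r}^{\trans} \right)^{-1} \bhb
    = \frac{\bhc^{\trans} \bhPhi(s)^{-1} \bhb}{1 + \bones_{r}^{\trans} \bhPhi(s)^{-1} \bhb},
\]
which is exactly~\cref{eqn:fotfbary1}.

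For the second representation I would use that $\bhPhi(s)^{-1} = \diag\!\big(\tfrac{1}{s - \lambda_{1}}, \ldots, \tfrac{1}{s - \lambda_{r}}\big)$, so that with $\bhc = \begin{bmatrix} h_{1} & \cdots & h_{r} \end{bmatrix}^{\trans}$ and $\bhb = \begin{bmatrix} w_{1} & \cdots & w_{r} \end{bmatrix}^{\trans}$ one has $\bhc^{\trans} \bhPhi(s)^{-1} \bhb = \sum_{i=1}^{r} \tfrac{h_{i} w_{i}}{s - \lambda_{i}}$ and $\bones_{r}^{\trans} \bhPhi(s)^{-1} \bhb = \sum_{i=1}^{r} \tfrac{w_{i}}{s - \lambda_{i}}$; substituting these into~\cref{eqn:fotfbary1} gives~\cref{eqn:fotfbary2}. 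The only point requiring a little care — and the closest thing to an obstacle here — is the set of $s$ on which the manipulations are literally valid: the identities above hold for all $s$ away from the poles of $\bhPhi(s)^{-1}$ and the eigenvalues of $\bhA$, and since both sides are rational functions of $s$ agreeing on a set with an accumulation point, they agree as rational functions. In particular the barycentric form~\cref{eqn:fotfbary2} extends continuously to each node $\lambda_{i}$, where it evaluates to $h_{i}$, consistently with the interpolation conditions established in \Cref{lmm:fointerp}.
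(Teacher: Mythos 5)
Your proposal is correct and follows the same route as the paper's proof: apply \Cref{prp:smw} with $\bu = \bhb$, $\bv = \bones_{r}$, $\bz = \bhc$, $\bX = \bhPhi(s)$ to obtain~\cref{eqn:fotfbary1}, then expand the two scalar quantities using the diagonal structure of $\bhPhi(s)$ to get~\cref{eqn:fotfbary2}. Your extra remarks about the set of admissible $s$ and agreement as rational functions are careful additions the paper leaves implicit, but they do not change the argument.
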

\begin{proof}
  Applying the identity~\cref{eqn:smw} in \Cref{prp:smw} to
  $\hH(s) = \bhc^{\trans} \left(\bhPhi(s) + \bhb \bones_{r}^{\trans}
  \right)^{-1} \bhb$ using
  \begin{equation*}
    \begin{aligned}
      \bu & = \bhb, &
      \bv & = \bones_{r}, &
      \bz & = \bhc, &
      \bX & = \bhPhi(s)
    \end{aligned}
  \end{equation*}
  yields~\cref{eqn:fotfbary1}.
  The fact that $\bhPhi(s)$ is diagonal and the definitions of 
  $\bhb$ and $\bhc$ directly lead to
  \begin{equation*}
    \begin{aligned}
      \bhc^{\trans} \bhPhi(s)^{-1} \bhb & = \sum\limits_{i = 1}^{r}
        \frac{h_{i} w_{i}}{(s - \lambda_{i})} & \text{and} &&
      \bones_{r}^{\trans} \bhPhi(s)^{-1} \bhb = \sum\limits_{i = 1}^{r}
        \frac{w_{i}}{(s - \lambda_{i})},
    \end{aligned}
  \end{equation*}
  which then together result in the barycentric form~\cref{eqn:fotfbary2}.
\end{proof}

As stated earlier, the expression~\cref{eqn:fotfbary2} is known as the
barycentric form of the rational interpolant and is a well-studied
object~\cite{BerT04} as it forms the foundation for many rational
approximation techniques~\cite{AntA86, NakST18}.
The derivation of the barycentric form in \Cref{cor:fobary} follows a
perspective from systems and control theory that aligns well with
the second-order dynamics we study next.

The additional value of one in the denominator of~\cref{eqn:fotfbary1} appears
as a result of the Sherman-Morrison-Woodbury formula.
Typically, in the classical theory about barycentric interpolation, 
such term does not appear in the denominator of the barycentric formula; see in
particular~\cite{BerT04, NakST18}.
The rational function represented in~\cref{eqn:fotfbary1} is strictly proper
since the degree of the denominator is greater than the one of the numerator,
which aligns with the setting of corresponding LTI systems.
Although this may seem restrictive, the proposed approach can also accommodate
proper rational functions corresponding to LTI systems with a nonzero
feed-through term in the state-output equation.

  
\section{Structured barycentric forms}%
\label{sec:barycentric}

As in the previous section, we assume to have transfer function measurements
of the form~\cref{eqn:data} given and aim to construct models that fit the
given data.
However, in contrast to \Cref{sec:basics}, we aim, from now on, to 
construct structured models of the form~\cref{eqn:sosys} denoted as
$\hSigma_{\mathrm{SO}}: (\bhM, \bhD, \bhK, \bhb, \bhc)$, with the model matrices
$\bhM, \bhD, \bhK \in \R^{r \times r}$ and $\bhb, \bhc \in \R^{r}$.
Before we present the main results of this work, we introduce the following two
sets of assumptions that will be needed later on.

\begin{assumption} \label{asm:one}
  For the model matrices $\hSigma_{\mathrm{SO}}:
  (\bhM, \bhD, \bhK, \bhb, \bhc)$ and given data in~\cref{eqn:data}, we assume
  that
  \begin{itemize}
    \item[(a)] the matrix $\bhM$ is invertible, and
      \hfill (A1.1)
    \item[(b)] the interpolation points $\{\lambda_{1}, \ldots, \lambda_{r} \}$
      are all distinct.
      \hfill (A1.2)
  \end{itemize}
\end{assumption}

The reasons for imposing \hyperref[asm:one]{Assumptions~(A1.1)}
and~\hyperref[asm:one]{(A1.2)} are similar to those in the case of
first-order systems from the previous section.
More specifically, \hyperref[asm:one]{Assumption~(A1.1)} enforces the
system~\cref{eqn:sosys} to be described by ordinary differential equations
rather than differential-algebraic ones, which require a singular $\bhM$.
The modeling of such descriptor systems is left for future research.
As in the first-order case, \hyperref[asm:one]{Assumption~(A1.2)} is
necessary to avoid inconsistencies in the interpolation conditions.
The case in which repeated interpolation points and derivative data are
used for Hermite interpolation will be considered in a separate work.

\begin{assumption} \label{asm:two}
  For the model matrices $\hSigma_{\mathrm{SO}}:
  (\bhM, \bhD, \bhK, \bhb, \bhc)$ and given data in~\cref{eqn:data}, we assume
  for $i, k = 1, \ldots, r$ and $i \neq k$ that either
  \begin{itemize}
    \item[(a)] $-(\lambda_{k} + \lambda_{i})$ is not an eigenvalue of
      the matrix $\bhM^{-1}\bhD$, or
      \hfill (A2.1)
    \item[(b)] $(\lambda_{k} \lambda_{i})$ is not an eigenvalue of
      the matrix $\bhM^{-1}\bhK$.
      \hfill (A2.2)
  \end{itemize}
\end{assumption}

In contrast to \hyperref[asm:one]{Assumptions~(A1.1)}
and~\hyperref[asm:one]{(A1.2)}, which need to hold both at the same time,
only one of \hyperref[asm:two]{Assumptions~(A2.1)}
and~\hyperref[asm:two]{(A2.2)} will be imposed at once, since these
two assumptions are equivalent to each other for their respectively
corresponding structured barycentric form, as it will become clearer later on. 
Although  \hyperref[asm:two]{Assumptions~(A2.1)}
and~\hyperref[asm:two]{(A2.2)} may seem restrictive at first glance, we will
show that they occur naturally for practical choices of the parameters in
the new structured barycentric forms.
A more detailed discussion of this topic is provided in \Cref{subsec:linparam}.


\subsection{Interpolatory second-order transfer functions}%
\label{subsec:interpso}

The following result extends \Cref{lmm:fointerp} to second-order
systems establishing sufficient conditions for the interpolation of given
transfer function data~\cref{eqn:data}.

\begin{lemma}%
  \label{lmm:sointerp}
  Given the interpolation data~\cref{eqn:data}, define
  \begin{equation*}
    \begin{aligned}
      \bLambda & = \diag(\lambda_{1}, \ldots, \lambda_{r}) \in \C^{r \times r} &
      \text{and} && 
      \bhc & = \begin{bmatrix} h_{1} & h_{2} & \cdots & h_{r}
        \end{bmatrix}^{\trans} \in \C^{r},
    \end{aligned}
  \end{equation*}
  and let $\bones_{r}^{\trans} = \begin{bmatrix} 1 & \cdots & 1 \end{bmatrix}
  \in \C^{1 \times r}$ be the vector of ones.
  Let the second-order model $\hSigma_{\mathrm{SO}}:
  (\bhM,\bhD, \bhK, \bhb, \bhc)$ be constructed such that
  \begin{equation} \label{eqn:paramSO}
    \bhM \bLambda^{2} + \bhD \bLambda + \bhK = \bhb \bones_{r}^{\trans},
  \end{equation}
  holds, where $\bhb = \begin{bmatrix} w_{1} & \ldots & w_{r}
  \end{bmatrix}^{\trans} \in \C^{r}$ contains free parameters with
  $w_{k} \neq 0$ for $k = 1, \ldots, r$.
  If \hyperref[asm:one]{Assumptions~(A1.1)} and~\hyperref[asm:one]{(A1.2)} as
  well as either \hyperref[asm:two]{Assumption~(A2.1)}
  or~\hyperref[asm:two]{(A2.2)} hold, then the transfer function 
  \begin{equation*}
    \hH(s) = \bhc^{\trans} (s^{2} \bhM + s \bhD + \bhK)^{-1} \bhb
  \end{equation*}
  of $\hSigma_{\mathrm{SO}}$ interpolates the data in~\cref{eqn:data}, i.e.,
  it holds
  \begin{equation} \label{eqn:interpso}
    \begin{aligned}
      \hH(\lambda_{1}) & = h_{1}, &
      \hH(\lambda_{2}) & = h_{2}, &
      \ldots, &&
      \hH(\lambda_{r}) & = h_{r}.
    \end{aligned}
  \end{equation}
\end{lemma}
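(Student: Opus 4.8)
The plan is to imitate the proof of \Cref{lmm:fointerp}, replacing the linear characteristic matrix $s\bhE - \bhA$ by the quadratic one $P(s) := s^{2}\bhM + s\bhD + \bhK$. First I would multiply the parametrization~\cref{eqn:paramSO} from the right by the $i$-th unit vector $\be_{i}$; since $\bLambda$ is diagonal, $\bLambda\be_{i} = \lambda_{i}\be_{i}$ and $\bLambda^{2}\be_{i} = \lambda_{i}^{2}\be_{i}$, so one obtains at once
\begin{equation*}
  P(\lambda_{i})\be_{i} = \bigl(\lambda_{i}^{2}\bhM + \lambda_{i}\bhD + \bhK\bigr)\be_{i} = \bhb\bones_{r}^{\trans}\be_{i} = \bhb, \qquad i = 1,\ldots,r.
\end{equation*}
As soon as one knows that $P(\lambda_{i})$ is invertible, this gives $P(\lambda_{i})^{-1}\bhb = \be_{i}$, and left-multiplying by $\bhc^{\trans}$ yields $\hH(\lambda_{i}) = \bhc^{\trans}\be_{i} = h_{i}$, which is exactly~\cref{eqn:interpso}. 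Thus the whole substance of the lemma is the claim that no interpolation point $\lambda_{i}$ is an eigenvalue of the quadratic matrix polynomial $P$.

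To expose where the hypotheses enter, I would eliminate $\bhK$ using~\cref{eqn:paramSO}, i.e., $\bhK = \bhb\bones_{r}^{\trans} - \bhM\bLambda^{2} - \bhD\bLambda$, and, since diagonal matrices commute and $s^{2}\bI_{r} - \bLambda^{2} = (s\bI_{r}+\bLambda)(s\bI_{r}-\bLambda)$,
\begin{equation*}
  P(s) = \bhM\bigl(s^{2}\bI_{r} - \bLambda^{2}\bigr) + \bhD\bigl(s\bI_{r} - \bLambda\bigr) + \bhb\bones_{r}^{\trans} = \bigl[\bhM(s\bI_{r}+\bLambda) + \bhD\bigr](s\bI_{r} - \bLambda) + \bhb\bones_{r}^{\trans}.
\end{equation*}
On a punctured neighbourhood of $\lambda_{i}$ the factor $s\bI_{r}-\bLambda$ is invertible, and factoring out the inner matrix $\bhM(s\bI_{r}+\bLambda)+\bhD$ and applying \Cref{prp:smw} with $\bX = s\bI_{r}-\bLambda$, $\bu = \bg(s) := (\bhM(s\bI_{r}+\bLambda)+\bhD)^{-1}\bhb$, $\bv = \bones_{r}$, $\bz = \bhc$ gives the second-order analogue of \Cref{cor:fobary},
\begin{equation*}
  \hH(s) = \frac{\bhc^{\trans}(s\bI_{r}-\bLambda)^{-1}\bg(s)}{1 + \bones_{r}^{\trans}(s\bI_{r}-\bLambda)^{-1}\bg(s)} = \frac{\displaystyle\sum_{k=1}^{r}\frac{h_{k}g_{k}(s)}{s-\lambda_{k}}}{\displaystyle 1 + \sum_{k=1}^{r}\frac{g_{k}(s)}{s-\lambda_{k}}}.
\end{equation*}
Multiplying numerator and denominator by $s-\lambda_{i}$ and letting $s\to\lambda_{i}$, the $k=i$ term dominates (the others vanishing like $s-\lambda_{i}$), and the quotient tends to $h_{i}g_{i}(\lambda_{i})/g_{i}(\lambda_{i}) = h_{i}$. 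If instead \hyperref[asm:two]{Assumption~(A2.2)} is in force, I would eliminate $\bhD$ from~\cref{eqn:paramSO} rather than $\bhK$, producing the same picture with an inner matrix controlled by the spectrum of $\bhM^{-1}\bhK$ and the products $\lambda_{k}\lambda_{i}$; this is why the two assumptions are interchangeable for this parametrization.

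The main obstacle is to make the two preceding paragraphs rigorous at the nodes themselves: one must show that the inner matrix $\bhM(\lambda_{i}\bI_{r}+\bLambda)+\bhD$ is invertible, that $g_{i}(\lambda_{i})\neq 0$, and hence that $P(\lambda_{i})$ is nonsingular. This is exactly where \hyperref[asm:one]{Assumption~(A1.1)}, the distinctness from \hyperref[asm:one]{(A1.2)}, the excluded-eigenvalue conditions \hyperref[asm:two]{(A2.1)}/\hyperref[asm:two]{(A2.2)}, and the requirement $w_{k}\neq 0$ are needed. Following the first-order proof, I would argue by contradiction: if $\bv^{\trans}P(\lambda_{i}) = \bfz^{\trans}$ with $\bv\neq\bfz$, then post-multiplying by $\be_{i}$ and by the $\be_{k}$ with $k\neq i$, using $P(\lambda_{k})\be_{k}=\bhb$ and the distinctness of the nodes, forces $\bv^{\trans}\bhb = 0$ together with $\bv^{\trans}\bigl[\bhD + (\lambda_{i}+\lambda_{k})\bhM\bigr]\be_{k} = 0$ for all $k\neq i$ (respectively $\bv^{\trans}\bigl[\bhK - \lambda_{i}\lambda_{k}\bhM\bigr]\be_{k}=0$ in the (A2.2) case); invoking the exclusion of $-(\lambda_{i}+\lambda_{k})$ from the spectrum of $\bhM^{-1}\bhD$ (respectively $\lambda_{i}\lambda_{k}$ from that of $\bhM^{-1}\bhK$) is then what rules out a nonzero $\bv$, and carefully carrying this out is the heart of the proof.
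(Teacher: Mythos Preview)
Your outline is essentially the paper's argument: multiply~\cref{eqn:paramSO} on the right by $\be_i$ to obtain $P(\lambda_i)\be_i=\bhb$, and then rule out $\lambda_i$ as a left eigenvalue of the quadratic pencil by contradiction. Your derivation of $\bv^{\trans}\bhb=0$ and of $\bv^{\trans}\bigl[\bhD+(\lambda_i+\lambda_k)\bhM\bigr]\be_k=0$ for $k\neq i$ is exactly equivalent to the paper's identity $\bv^{\trans}(\bhD+\lambda_i\bI_r+\bLambda)(\lambda_i\bI_r-\bLambda)=\bfz$, which it obtains by substituting $\bhK$ from~\cref{eqn:paramSO} into the left-eigenvector relation.

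The one step you leave as ``carefully carrying this out'' is precisely where the paper adds an ingredient you have not mentioned: after normalizing $\bhM=\bI_r$, it assumes \emph{without loss of generality that $\bhD=\diag(\delta_1,\ldots,\delta_r)$}. Then your constraint reads entrywise as $v_k(\delta_k+\lambda_i+\lambda_k)=0$, so \hyperref[asm:two]{(A2.1)} kills $v_k$ for $k\neq i$, and $w_i\neq 0$ together with $\bv^{\trans}\bhb=0$ kills $v_i$. Without this diagonal reduction, \hyperref[asm:two]{(A2.1)} only guarantees invertibility of the scalar-shifted matrices $\bhD+(\lambda_i+\lambda_k)\bhM$, whereas your $r-1$ conditions assemble into $\bv^{\trans}\bigl[\bhD+\bhM(\lambda_i\bI_r+\bLambda)\bigr]\in\operatorname{span}\{\be_i^{\trans}\}$, and it is not evident how the hypothesis alone forces $\bv=\bfz$; so this reduction (or an equivalent device) is the missing link in your plan.

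Your barycentric/limit paragraph is correct as a heuristic but is not part of the paper's proof of this lemma; that material is the content of \Cref{lmm:sophi} and \Cref{thm:baryformK}, and here the paper proceeds directly via the contradiction argument.
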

\begin{proof}
  Without loss of generality, we show the proof tailored specifically to
  the case $\bhM = \bI_{r}$.
  This scenario is by no means restrictive, since the $\bhM$ matrix is
  considered to be invertible; see \hyperref[asm:one]{Assumption~(A1.1)}.
  Thus, it can be incorporated into $\bhD$, $\bhK$ and $\bhb$, accordingly.
  For the consideration of eigenvalues of second-order systems, we
  introduce the augmented matrices
  \begin{equation} \label{eqn:sointerpTmp1}
    \begin{aligned}
      \bcE & = \begin{bmatrix}
        \bI_{r} & 0 \\ 0 & \bhM
      \end{bmatrix} = \bI_{2r}, &
      \bcA & = \begin{bmatrix}
        0 & \bI_{r} \\ -\bhK & -\bhD
      \end{bmatrix}.
    \end{aligned}
  \end{equation}
  Since the entries of $\bhb$ are nonzero and with
  \hyperref[asm:two]{Assumption~(A2.1)} that $-(\lambda_{k} + \lambda_{i})$ is
  not an eigenvalue of matrix $\bhD$, the interpolation point $\lambda_{i}$ is
  not a solution to the linearized eigenvalue problem of the matrix pencil
  $(\bcA,\bcE)$ in~\cref{eqn:sointerpTmp1}, i.e., it is not an eigenvalue of the
  quadratic pencil involving  $\bhM$, $\bhD$ and $\bhK$.
  We prove this claim by contradiction.
  Let the entries of $\bhb$ be nonzero and assume that $\lambda_{i}$ is an
  eigenvalue of $\bcA$ with the corresponding left-eigenvector
  $\bv^{\trans} = \begin{bmatrix} \bv_{1}^{\trans} & \bv_{2}^{\trans}
  \end{bmatrix}$.
  Thus, it holds
  \begin{equation*}
    \begin{bmatrix} \bv_{1}^{\trans} & \bv_{2}^{\trans} \end{bmatrix}
      \left( \lambda_{i} \bcE  - \bcA  \right) = \bfz.
  \end{equation*}
  Employing the block matrix structure from~\cref{eqn:sointerpTmp1}
  yields the quadratic eigenvalue relation
  \begin{equation} \label{eqn:sointerpTmp2}
    \lambda_{i}^{2} \bv_{2}^{\trans} \bI_{r} +
      \lambda_{i} \bv_{2}^{\trans} \bhD +
      \bv_{2}^{\trans} \bhK = \bfz.
  \end{equation}
  By multiplying the constraint in~\cref{eqn:paramSO} with $\be_{i}$ from the
  right, we obtain for $1 \leq i \leq k$ that
  \begin{equation} \label{eqn:sointerpTmp3}
    (\lambda_{i}^{2} \bI_{r} + \bhD \lambda_{i} + \bhK) \be_{i} = \bhb.
  \end{equation}
 Then,  multiplication of this last equation~\cref{eqn:sointerpTmp3} with
  $\bv_{2}^{\trans}$ from the left yields
  \begin{equation} \label{eqn:sointerpTmp4}
    \begin{aligned}
      \lambda_{i}^{2} \bv_{2}^{\trans} \be_{i} +
        \lambda_{i} \bv_{2}^{\trans} \bhD \be_{i} +
        \bv_{2}^{\trans} \bhK \be_{i} = \bv_{2}^{\trans} \bhb.
    \end{aligned}
  \end{equation}
  It follows directly from~\cref{eqn:sointerpTmp2,eqn:sointerpTmp4}
  that $\bv_{2}^{\trans} \bhb = \bfz$.
  Let the eigenvector be given as
  $\bv_{2} = \begin{bmatrix} \alpha_{1} & \alpha_{2} & \ldots & \alpha_{r}
  \end{bmatrix}^{\trans}$.
  Without loss of generality assume that $\bhD$ is a diagonal matrix with
  $\bhD = \diag(\delta_{1}, \ldots, \delta_{r})$.
  Now we use~\cref{eqn:paramSO} to describe the stiffness matrix $\bhK$ in
  terms of the rest such that $\bhK = \bhb \bones_{r}^{\trans} - \bhD
  \bLambda- \bLambda^{2}$ and substitute this relation
  into~\cref{eqn:sointerpTmp2} to obtain
  \begin{align*}
    \bfz & = \lambda_{i}^{2} \bv_{2}^{\trans} +
      \lambda_{i} \bv_{2}^{\trans} \bhD +
      \bv_{2}^{\trans} (\bhb \bones_{r}^{\trans} - \bhD \bLambda -
      \bLambda^{2}) \\
    & = \bv_{2}^{\trans} \bhD (\lambda_{i} \bI_{r}- \bLambda) +
      \bv_{2}^{\trans} (\lambda_{i}^{2} \bI_{r} - \bLambda^{2}) +
      \underbrace{\bv_{2}^{\trans} \bhb}_{\phantom{\,\bfz}=\,\bfz}
      \bones_{r}^{\trans} \\
    & = \bv_{2}^{\trans}(\bhD + \lambda_{i} \bI_{r} + \bLambda)
      (\lambda_{i} \bI_{r} - \bLambda) \\
    & = \begin{bmatrix} \alpha_{1} (\delta_{1} + \lambda_{1} + \lambda_{i})
      (\lambda_{1} - \lambda_{i}) & \ldots  & 0  & \ldots & 
      \alpha_{r} (\delta_{r} + \lambda_{r} + \lambda_{i})
      (\lambda_{r} - \lambda_{i}) \end{bmatrix}.
  \end{align*}
  Since the $\lambda_{k}$'s are distinct (\hyperref[asm:one]{Assumption~(A1.2)})
  and $\delta_{k} + \lambda_{k} + \lambda_{i} \neq 0$ for all $1 \leq k \leq r$
  due to \hyperref[asm:two]{Assumption~(A2.1)}, it implies that 
  $\alpha_{1} = \ldots = \alpha_{i-1} = \alpha_{i+1} = \ldots = \alpha_{r} = 0$.
  Moreover, using $\bv_{2}^{\trans} \bhb = 0$, it holds that
  $\alpha_{i} w_{i} = 0$.
  Since $w_{i} \neq 0$, this would imply $\alpha_{i} = 0$, yielding
  $\bv_{2} = \bfz$.
  However, $\bv_2$ is an eigenvector, thus leading to the contradiction.
  Therefore, $\lambda_{i}$ is not a solution to the quadratic eigenvalue
  problem.

  As a results, the matrix $\lambda_{i}^{2} \bI_{r} + \lambda_{i} \bhD + \bhK$
  is non-singular, and by multiplying~\cref{eqn:sointerpTmp3} with
  $(\bI_{r} \lambda_{i}^{2} + \bhD \lambda_{i} + \bhK)^{-1}$ from the left,
  we get
  \begin{equation*}
    (\bI_r \lambda_{i}^{2} + \bhD \lambda_{i} + \bhK)^{-1} \bhb = \be_{i}
  \end{equation*}
  and, therefore,
  \begin{equation*}
    \bhc^{\trans} (\bI_r \lambda_{i}^{2} + \bhD \lambda_{i} + \bhK)^{-1} \bhb =
      \bhc \be_{i} = h_{i}.
  \end{equation*}
  Hence, we have shown that $\hH(\lambda_{i}) = h_{i}$ for any
  $1 \leq i \leq r$.
  Note that we only used \hyperref[asm:two]{(A2.1)} out of \Cref{asm:two}
  in this proof, which allowed the description of the stiffness matrix $\bhK$
  by the other terms in~\cref{eqn:paramSO}.
  An analogous proof relies on \hyperref[asm:two]{Assumption~(A2.2)}, which
  allows the reformulation of~\cref{eqn:paramSO} for the damping matrix
  $\bhD$.
  Due to the similarity to the presented proof, we omit this part.
\end{proof}

Similar to the case of first-order systems in \Cref{sec:basics}, we can
eliminate one of the unknown matrices in the constraint~\cref{eqn:paramSO}
by using \hyperref[asm:one]{Assumption~(A1.1)}.
Thereby, we will choose the mass matrix to be the $r$-dimensional
identity matrix, $\bhM = \bI_{r}$.
However, in contrast to the case of first-order systems, this leaves us with
three remaining unknown matrices in~\cref{eqn:paramSO} instead of two.
Following diagonalization assumptions, which we will point out later in detail,
this leaves us with $2r$ free parameters to choose for the explicit realization
of interpolating second-order systems.

\begin{remark}
  Aside from this work, a data-driven method for the derivation of structured
  models with interpolating transfer functions has been developed
  in~\cite{SchUBetal18}.
  Therein, the authors use constraints similar to~\cref{eqn:paramSO} to
  parametrize the model matrices as the solution of large-scale linear systems 
  of equations to enforce the interpolation conditions.
  The unknowns in these linear systems correspond to the entries of the
  vectorized state-space quantities.
  There is no discussion of or connection to structured barycentric forms
  in~\cite{SchUBetal18} (which represents the main novelty of the current work)
  as~\cite{SchUBetal18} is directly related to and based on the
  \emph{non-barycentric} Loewner framework for
  interpolation~\cite{MayA07} and the projection-based interpolatory model
  reduction of structured systems~\cite{BeaG09}.
  However, it will be interesting to revisit~\cite{SchUBetal18} in a future
  work since it might provide directions for extending the barycentric form to
  different structures than those we consider here.  
\end{remark}

In the following derivation of structured barycentric forms, we will make use
of the equality constraint in~\cref{eqn:paramSO} that enforces $r$ interpolation
conditions.
As mentioned above, the remaining free $2r$ parameters are given in the input
vector $\bhb$ and either in the stiffness matrix $\bhK$ or damping
matrix $\bhD$.
Therefore, different barycentric forms result from the reformulation
of~\cref{eqn:paramSO} in terms of either stiffness (in \Cref{subsec:constrK})
or damping matrix (in \Cref{subsec:constrD}).


\subsection{Parametrization with constrained stiffness matrix}
\label{subsec:constrK}


\subsubsection{General setup}

In this section, we incorporate the remaining free parameters of the system
$\hSigma_{\mathrm{SO}}$ into the damping matrix $\bhD$ and the input
vector $\bhb$.
Therefore, from~\cref{eqn:paramSO} it follows that the stiffness matrix
satisfies
\begin{equation} \label{eqn:sylvK}
  \bhK = \bhb \bones_{r}^{\trans}-\bhM \bLambda^{2} - \bhD \bLambda.
\end{equation}
Substituting~\cref{eqn:sylvK} into the transfer function $\hH(s)$ corresponding
to the second-order model $\hSigma_{\mathrm{SO}}: (\bhM,\bhD, \bhK, \bhb,
\bhc)$ yields
\begin{align} \label{eqn:sophi}
  \hH(s) & = \bhc^{\trans} (s^2\bhM + s \bhD+\bhK)^{-1} \bhb \nonumber \\
  & = \bhc^{\trans} (s^{2} \bhM + s \bhD + \bhb \bones_{r}^{\trans} -
    \bhM \bLambda^{2} - \bhD \bLambda)^{-1} \bhb \nonumber\\
  & = \bhc^{\trans} (\bhPhi(s) + \bhb \bones_{r}^{\trans})^{-1} \bhb,
\end{align}
where the matrix-valued function $\bhPhi(s)$ is given by
\begin{equation*}
  \begin{aligned}
    \bhPhi(s) & = s^{2} \bhM + s \bhD - \bhM \bLambda^{2} - \bhD \bLambda\\
    & = \bhM (s^{2} \bI_{r}-\bLambda^{2})+\bhD (s \bI_{r} -\bLambda) \\
    & =  \left( \bhM(s \bI_{r}+\bLambda) + \bhD  \right) (s \bI_{r} - \bLambda).
  \end{aligned}
\end{equation*}

The following lemma states the structured barycentric form of~\cref{eqn:sophi}
in terms of the input and output vectors, and the matrix-valued function
$\bhPhi(s)$.

\begin{lemma}%
  \label{lmm:sophi}
  Given the interpolation data~\cref{eqn:data}, the transfer
  function~\cref{eqn:sophi} of the second-order model that yields the
  interpolation conditions~\cref{eqn:interpso} can be equivalently expressed as
  \begin{equation*}
     \hH(s) = \frac{\bhc^{\trans} \bhPhi(s)^{-1} \bhb}{1 + \bones_{r}^{\trans}
       \bhPhi(s)^{-1} \bhb},
  \end{equation*}
  where $\bhPhi(s) = \left( \bhM(s \bI_{r}+\bLambda) + \bhD  \right)
  (s \bI_{r} - \bLambda)$, and $\bLambda$, $\bhc$, and $\bhb$ are as defined in
  \Cref{lmm:fointerp}. 
\end{lemma}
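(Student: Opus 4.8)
The plan is to recognize that~\cref{eqn:sophi} has already been massaged into the rank-one-update form $\hH(s) = \bhc^{\trans}\bigl(\bhPhi(s) + \bhb\bones_{r}^{\trans}\bigr)^{-1}\bhb$, which is structurally identical to the first-order expression~\cref{eqn:fophi}. Consequently, the argument mirrors that of \Cref{cor:fobary}: I would apply the Sherman--Morrison--Woodbury identity of \Cref{prp:smw} with the substitutions $\bu = \bhb$, $\bv = \bones_{r}$, $\bz = \bhc$, and $\bX = \bhPhi(s)$, which turns $\bhc^{\trans}\bigl(\bhPhi(s) + \bhb\bones_{r}^{\trans}\bigr)^{-1}\bhb$ directly into $\dfrac{\bhc^{\trans}\bhPhi(s)^{-1}\bhb}{1 + \bones_{r}^{\trans}\bhPhi(s)^{-1}\bhb}$, as claimed. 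No new algebra beyond invoking~\cref{eqn:smw} is needed once the substitution is in place, since the new $\bhPhi(s)$ enters \Cref{prp:smw} only as an abstract invertible matrix.

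The point that requires attention is checking the hypotheses of \Cref{prp:smw}, i.e., that both $\bhPhi(s)$ and $\bhPhi(s) + \bhb\bones_{r}^{\trans} = s^{2}\bhM + s\bhD + \bhK$ are invertible at the relevant values of $s$. The second matrix is exactly the quadratic pencil of $\hSigma_{\mathrm{SO}}$ and is singular only at its finitely many eigenvalues, that is, the poles of $\hH$. For the first, I would use the factorization $\bhPhi(s) = \bigl(\bhM(s\bI_{r}+\bLambda)+\bhD\bigr)(s\bI_{r}-\bLambda)$: the determinant $\det\bhPhi(s)$ is a polynomial in $s$ with leading coefficient $\det(\bhM) \neq 0$ by \hyperref[asm:one]{Assumption~(A1.1)}, so $\bhPhi(s)$ is invertible for all but finitely many $s$. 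On the cofinite set where both matrices are invertible, \Cref{prp:smw} applies and delivers the barycentric formula; since both sides are rational functions of $s$, the identity then holds wherever it is defined. The interpolation property~\cref{eqn:interpso} referenced in the statement is not re-derived here --- it is precisely the conclusion of \Cref{lmm:sointerp}, valid because $\bhK$ was chosen through~\cref{eqn:sylvK} so that the constraint~\cref{eqn:paramSO} holds, together with \hyperref[asm:one]{Assumptions~(A1.1)}, \hyperref[asm:one]{(A1.2)}, and either \hyperref[asm:two]{(A2.1)} or \hyperref[asm:two]{(A2.2)}.

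The main, and essentially only, obstacle is therefore a mild bookkeeping one: unlike the first-order \Cref{cor:fobary}, where $\bhPhi(s) = s\bI_{r} - \bLambda$ is transparently invertible away from the interpolation points, here one of the two factors of $\bhPhi(s)$, namely $\bhM(s\bI_{r}+\bLambda)+\bhD$, needs a separate nonsingularity argument. Under the normalization that will be imposed later ($\bhM = \bI_{r}$ and $\bhD$ diagonalized as $\diag(\delta_{1},\dots,\delta_{r})$), this factor is simply $\diag\bigl(s+\lambda_{i}+\delta_{i}\bigr)$, whose invertibility away from $s = -(\lambda_{i}+\delta_{i})$ is exactly what \hyperref[asm:two]{Assumption~(A2.1)} encodes along the pencil; so in the end the verification is light and the lemma follows essentially immediately from \Cref{prp:smw}, just as \Cref{cor:fobary} followed from it in the unstructured case.
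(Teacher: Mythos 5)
Your proposal is correct and follows the same route as the paper: the paper's proof is exactly the one-line application of \Cref{prp:smw} with $\bu = \bhb$, $\bv = \bones_{r}$, $\bz = \bhc$, $\bX = \bhPhi(s)$ applied to~\cref{eqn:sophi}. Your additional verification of the invertibility hypotheses (via the factorization of $\bhPhi(s)$ and a generic-$s$/rational-function argument) is sound extra care that the paper leaves implicit.
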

\begin{proof}
  Using \Cref{prp:smw} for the formulation of $\hH(s)$ in~\cref{eqn:sophi}, with
  the following choice of vectors and matrices
  \begin{equation*}
    \begin{aligned}
      \bu & = \bhb, &
      \bv & = \bones_{r}, &
      \bz & = \bhc, & 
      \bX & = \bhPhi(s)
    \end{aligned}
  \end{equation*}
  yields the result of the lemma.
\end{proof}

As mentioned in \Cref{subsec:interpso}, we choose the mass matrix $\bhM$ to be
the identity due to \hyperref[asm:one]{Assumption~(A1.1)}.
Under the assumption that $\bhD$ has no higher order Jordan blocks, it can be
diagonalized while preserving $\bhM = \bI_{r}$ such that
\begin{equation*}
  \begin{aligned}
    \bhD & = \diag(\delta_{1}, \ldots, \delta_{r}) & \text{and} &&
    \bhM & = \bI_{r} = \diag(1, \ldots, 1).
  \end{aligned}
\end{equation*}
In this case, the matrix-valued function $\bhPhi(s)$ is a diagonal matrix for
all $s \in \C$, which allows us to write
\begin{align} \label{eqn:hphiK}
  \bhPhi(s) & = \left( \bhM(s \bI_{r} + \bLambda) + \bhD \right)
    (s \bI_{r} - \bLambda) \nonumber \\
  & = \diag \Big( (s - \lambda_{1}) (s + \lambda_{1} + \delta_{1}), \ldots,
    (s - \lambda_{r}) (s + \lambda_{r} + \delta_{r}) \Big).
\end{align}
Using this diagonal form of the matrix-valued function in \Cref{lmm:sophi}
yields the following result:
a structured barycentric formula for second-order transfer functions.
\begin{theorem}%
  \label{thm:baryformK}
  Given interpolation points and measurements
  $\{(\lambda_{i}, h_{i})|~ 1 \leq i \leq r\}$ and let
  \hyperref[asm:one]{Assumptions~(A1.1)}
  and~\hyperref[asm:one]{(A1.2)} as well as
  \hyperref[asm:two]{Assumption~(A2.1)} hold.
  The barycentric form of the transfer function $\hH(s)$ corresponding to
  second-order system $\hSigma_{\mathrm{SO}}: (\bhM,\bhD, \bhK, \bhb, \bhc)$ is
  given by
  \begin{equation} \label{eqn:barytfK}
    \hH(s) = \frac{\displaystyle \sum_{i = 1}^{r}
      \frac{h_{i} w_{i}}{(s - \lambda_{i})(s - \sigma_{i})}}
      {\displaystyle 1 + \sum_{i = 1}^{r}
      \frac{w_{i}}{(s - \lambda_{i})(s - \sigma_{i})}},
  \end{equation}
  with the weights $0 \neq w_{i} \in \C$ and support points
  $\sigma_{i} = -(\delta_{i} + \lambda_{i})$, where $\delta_{i} \in \C$ are
  damping parameters, for $1 \leq i \leq r$.
  The barycentric form~\cref{eqn:barytfK} satisfies the interpolation
  conditions~\cref{eqn:interpso}.
  The matrices of the corresponding second-order system are given by
  \begin{equation} \label{eqn:sosyswithK}
    \begin{aligned}
      \bhM & = \bI_{r}, &
      \bhD & = -\diag(\lambda_1+\sigma_1, \ldots, \lambda_r+\sigma_r),\\
      \bhK & = \bhb \bones_{r}^{\trans} - \bLambda^{2} - \bhD \bLambda, &
      \bhb & = \begin{bmatrix} w_{1} & \ldots & w_{r}
        \end{bmatrix}^{\trans}, \\
      \bhc & = \begin{bmatrix} h_{1}  & \ldots & h_{r} \end{bmatrix}^{\trans}.
    \end{aligned}
  \end{equation}
\end{theorem}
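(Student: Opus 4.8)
The plan is to specialize the $\bhPhi$-form of the transfer function from \Cref{lmm:sophi} to the diagonal case and then read off the scalar barycentric expression entrywise. First I would verify that the matrices listed in~\cref{eqn:sosyswithK} are a legitimate instance of the parametrization~\cref{eqn:paramSO}: with $\bhM = \bI_{r}$, $\bhD = \diag(\delta_{1}, \ldots, \delta_{r})$ where $\delta_{i} = -(\lambda_{i} + \sigma_{i})$, and $\bhK = \bhb \bones_{r}^{\trans} - \bLambda^{2} - \bhD \bLambda$, a direct substitution gives $\bhM \bLambda^{2} + \bhD \bLambda + \bhK = \bhb \bones_{r}^{\trans}$, so~\cref{eqn:paramSO} holds. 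Since \hyperref[asm:one]{Assumptions~(A1.1)} and~\hyperref[asm:one]{(A1.2)} as well as~\hyperref[asm:two]{Assumption~(A2.1)} are in force and $w_{i} \neq 0$ for all $i$, \Cref{lmm:sointerp} applies and yields the interpolation conditions~\cref{eqn:interpso} for the transfer function $\hH(s) = \bhc^{\trans}(s^{2} \bhM + s \bhD + \bhK)^{-1} \bhb$ of this system.

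Next I would invoke \Cref{lmm:sophi}, which rewrites this transfer function as
\[
  \hH(s) = \frac{\bhc^{\trans} \bhPhi(s)^{-1} \bhb}{1 + \bones_{r}^{\trans} \bhPhi(s)^{-1} \bhb},
  \qquad
  \bhPhi(s) = \big(\bhM(s \bI_{r} + \bLambda) + \bhD\big)(s \bI_{r} - \bLambda).
\]
With $\bhM = \bI_{r}$ and $\bhD$ diagonal, $\bhPhi(s)$ is diagonal for every $s \in \C$, and by~\cref{eqn:hphiK} its $i$-th diagonal entry is $(s - \lambda_{i})(s + \lambda_{i} + \delta_{i}) = (s - \lambda_{i})(s - \sigma_{i})$ upon substituting $\sigma_{i} = -(\delta_{i} + \lambda_{i})$. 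Since $\bhb = \begin{bmatrix} w_{1} & \ldots & w_{r}\end{bmatrix}^{\trans}$ and $\bhc = \begin{bmatrix} h_{1} & \ldots & h_{r}\end{bmatrix}^{\trans}$, expanding the two scalar quantities in the quotient entrywise gives
\[
  \bhc^{\trans} \bhPhi(s)^{-1} \bhb = \sum_{i=1}^{r} \frac{h_{i} w_{i}}{(s - \lambda_{i})(s - \sigma_{i})},
  \qquad
  \bones_{r}^{\trans} \bhPhi(s)^{-1} \bhb = \sum_{i=1}^{r} \frac{w_{i}}{(s - \lambda_{i})(s - \sigma_{i})},
\]
and substituting these back into the quotient is exactly~\cref{eqn:barytfK}. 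The claimed form of the system matrices is then just~\cref{eqn:sosyswithK} itself, with $\delta_{i}$ read off from $\sigma_{i}$.

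The main obstacle is the bookkeeping at the exceptional frequencies $s \in \{\lambda_{1}, \ldots, \lambda_{r}, \sigma_{1}, \ldots, \sigma_{r}\}$: there $\bhPhi(s)$ (and, where applicable, the quadratic pencil $s^{2}\bhM + s\bhD + \bhK = \bhPhi(s) + \bhb\bones_{r}^{\trans}$ needed to apply \Cref{prp:smw}) is singular and the individual terms of~\cref{eqn:barytfK} blow up, so the identity ``$\hH = $ barycentric form'' must be read as an identity of rational functions (equivalently, obtained after clearing denominators), and the value $\hH(\lambda_{i}) = h_{i}$ cannot be extracted naively from~\cref{eqn:barytfK}. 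This is exactly why the interpolation claim is routed through \Cref{lmm:sointerp}, whose proof works directly with the state-space form, showing that $\lambda_{i}$ is not an eigenvalue of the quadratic pencil and that $(\lambda_{i}^{2} \bhM + \lambda_{i} \bhD + \bhK)^{-1} \bhb = \be_{i}$. Here \hyperref[asm:two]{Assumption~(A2.1)} reads that no $\delta_{j}$ equals $-(\lambda_{k} + \lambda_{i})$ for $i \neq k$; in particular $\sigma_{k} \neq \lambda_{i}$ for $i \neq k$, so the support points do not collide with the interpolation points and the removable-singularity cancellation underlying~\cref{eqn:barytfK} is consistent.

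Everything else is routine: the algebraic identities above, the entrywise expansion of a diagonal inverse, and the single application of \Cref{prp:smw} hidden inside \Cref{lmm:sophi}. One remark worth stating explicitly in the write-up is that the diagonalizability of $\bhD$ is not an additional hypothesis in this theorem but part of the construction, since $\bhD$ is \emph{defined} to be diagonal in~\cref{eqn:sosyswithK}; consequently $\bhM = \bI_{r}$ is preserved and the reduction ``without loss of generality $\bhM = \bI_{r}$'' used inside \Cref{lmm:sointerp} and \Cref{lmm:sophi} is compatible with the stated normal form.
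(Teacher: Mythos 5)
Your proposal is correct and follows essentially the same route as the paper: specialize \Cref{lmm:sophi} to the diagonal case~\cref{eqn:hphiK}, expand $\bhc^{\trans}\bhPhi(s)^{-1}\bhb$ and $\bones_{r}^{\trans}\bhPhi(s)^{-1}\bhb$ entrywise, substitute $\sigma_{i} = -(\delta_{i}+\lambda_{i})$, and read off the realization~\cref{eqn:sosyswithK}. Your additional explicit checks---that~\cref{eqn:sosyswithK} satisfies~\cref{eqn:paramSO} so that \Cref{lmm:sointerp} gives the interpolation claim, and the remark on the removable singularities at $s \in \{\lambda_{i}, \sigma_{i}\}$---are points the paper leaves implicit, not a different argument.
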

\begin{proof}
  By making use of the diagonal structure of $\bhPhi(s)$ in~\cref{eqn:hphiK}
  and the other components 
  \begin{equation*}
    \begin{aligned}
      \bones_{r} & = \begin{bmatrix} 1 & \ldots & 1  \end{bmatrix}^{\trans}, &
      \bhb & = \begin{bmatrix} w_{1} & \ldots & w_{r} \end{bmatrix}^{\trans}, &
      \bhc & = \begin{bmatrix} h_{1} & \ldots & h_{r} \end{bmatrix}^{\trans}
    \end{aligned}
  \end{equation*}
  in the formulation of the transfer function in \Cref{lmm:sophi},
  the transfer function is rewritten in barycentric form by multiplying out
  the matrix-vector products as
  \begin{equation*}
    \hH(s) = \frac{\bhc^{\trans} \bhPhi(s)^{-1} \bhb}{1 + \bones_{r}^{\trans}
      \bhPhi(s)^{-1} \bhb}
    = \frac{\displaystyle \sum_{i = 1}^{r} \frac{h_{i} w_{i}}
      {(s - \lambda_{i})(s +  \lambda_{i} + \delta_{i})}}
      {\displaystyle 1 + \sum_{i = 1}^{r} \frac{w_{i}}{(s - \lambda_{i})
      (s + \lambda_{i} + \delta_{i})}}.
  \end{equation*}
  Setting the support points $\sigma_{i}  = -(\delta_{i} +  \lambda_{i})$, the
  result in~\cref{eqn:barytfK} follows directly.
  The realization~\cref{eqn:sosyswithK} is then given by rearranging the
  different parameters into the corresponding matrices and vectors.
\end{proof}

Note that given the notation $\bSigma = \diag(\sigma_{1}, \ldots, \sigma_{r})$,
the realization in~\cref{eqn:sosyswithK} can equivalently be written as
\begin{equation*}
  \begin{aligned}
    \bhM & = \bI_{r}, &
    \bhD & = -\bLambda-\bSigma, \\
    \bhK & = \bhb \bones_{r}^{\trans} + \bLambda \bSigma, &
    \bhb & = \begin{bmatrix} w_{1}  & \ldots & w_{r}
        \end{bmatrix}^{\trans}, \\
    \bhc & = \begin{bmatrix} h_{1}  & \ldots & h_{r} \end{bmatrix}^{\trans}.
  \end{aligned}
\end{equation*}
The free parameters that explicitly appear above are $2r$ in total and are
given by the entries of the vector $\bhb$ and of the diagonal matrix
$\bSigma$, i.e., the free parameters in the structured barycentric
form~\cref{eqn:barytfK} are $\{w_{1}, \ldots, w_{r} \}  \cup 
\{\sigma_{1}, \cdots, \sigma_{r} \}$.


\subsubsection{Systems with zero damping matrix}%
\label{subsubsec:constrKD0}

An important subclass of second-order systems~\cref{eqn:sosys} is given by a
zero damping matrix, i.e., $\bD = 0$.
These occur, for example, in the case of ``conservative'' dynamics where no
dissipation/damping is considered.
Hamiltonian systems belong to this category~\cite{AbrM87, MeyO17}.
Retaining this additional structure allows, for example, modeling the
preservation of energy in the system.
Another problem class that can be modeled by a zero damping matrix is the
case of hysteretic damping, i.e., constant damping over the complete frequency
range~\cite{CheY97, AumW23}.
This is used, for example, to model the general influence of physical structures
on the damping behavior of systems.
Thereby, the damping matrix is considered to be frequency dependent with
$\bD(s) = \frac{1}{s} \i \eta K$.
Inserting this damping definition into the second-order transfer
function~\cref{eqn:tf} yields
\begin{equation*}
  H(s) = \bc^{\trans} \left( s^{2} \bM + \frac{s}{s} \i \eta \bK + \bK
    \right)^{-1} \bb
    = \bc^{\trans} (s^{2} \bM + (1 + \i \eta) \bK)^{-1} \bb,
\end{equation*}
which can be seen as a system with a complex stiffness matrix and zero damping
matrix.
The following corollary refines the results from \Cref{thm:baryformK} to the
case of system structure with zero damping matrix, $\bhD = \bfz$.

\begin{corollary}%
  \label{cor:barytfKD0}
  Given interpolation points and measurements
  $\{(\lambda_{i}, h_{i})|~1 \leq i \leq r\}$ and let
  \hyperref[asm:one]{Assumptions~(A1.1)}
  and~\hyperref[asm:one]{(A1.2)} as well as
  \hyperref[asm:two]{Assumption~(A2.1)} hold.
  The barycentric form of the transfer function $\hH(s)$ corresponding to
  second-order system $\hSigma_{\mathrm{SO}}: (\bhM, \bfz, \bhK, \bhb, \bhc)$ is
  given by
  \begin{equation} \label{eqn:barytfKD0}
     \hH(s) = \frac{\displaystyle \sum_{i = 1}^{r}
       \frac{h_{i} w_{i}}{s^{2} - \lambda_{i}^{2}}}
       {\displaystyle 1 + \sum_{i = 1}^{r}
       \frac{w_{i}}{s^{2} - \lambda_{i}^{2}}},
  \end{equation}
  with the weights $0 \neq w_{i} \in \C$, for $1 \leq i \leq r$.
  The barycentric form~\cref{eqn:barytfKD0} satisfies the interpolation
  conditions~\cref{eqn:interpso} and it can be written as a second-order
  dynamical systems with zero damping, i.e.,
  \begin{equation*}
    \hH(s) = \bhc^{\trans} (s^2\bhM + \bhK)^{-1} \bhb,
  \end{equation*}
  where 
  \begin{equation*}
    \begin{aligned}
      \bhM & = \bI_{r}, &
      \bhK & = \bhb \bones_{r}^{\trans} - \bLambda^{2}, &
      \bhb &= \begin{bmatrix} w_{1} & \ldots & w_{r} \end{bmatrix}^{\trans}, &
      \bhc & = \begin{bmatrix} h_{1} & \ldots & h_{r} \end{bmatrix}^{\trans}.
    \end{aligned}
  \end{equation*}
\end{corollary}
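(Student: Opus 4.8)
The plan is to specialize \Cref{thm:baryformK} to the zero-damping case by setting the damping parameters $\delta_{i} = 0$ for all $i$, and to verify that all hypotheses carry over so that the interpolation conclusion and the realization follow directly. First I would note that taking $\bhD = \bfz$ in the parametrization~\cref{eqn:paramSO} with $\bhM = \bI_{r}$ gives the constraint $\bLambda^{2} + \bhK = \bhb \bones_{r}^{\trans}$, i.e., $\bhK = \bhb \bones_{r}^{\trans} - \bLambda^{2}$, which is exactly the claimed stiffness matrix. In the language of \Cref{thm:baryformK}, $\bhD = \diag(\delta_{1}, \ldots, \delta_{r}) = \bfz$ means $\delta_{i} = 0$, so the support points become $\sigma_{i} = -(\delta_{i} + \lambda_{i}) = -\lambda_{i}$, and the factor $(s - \lambda_{i})(s - \sigma_{i}) = (s - \lambda_{i})(s + \lambda_{i}) = s^{2} - \lambda_{i}^{2}$. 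Substituting this into~\cref{eqn:barytfK} gives~\cref{eqn:barytfKD0} immediately.

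Next I would check the assumptions. \hyperref[asm:one]{Assumption~(A1.1)} holds since $\bhM = \bI_{r}$ is invertible, and \hyperref[asm:one]{(A1.2)} is assumed in the statement. For \hyperref[asm:two]{Assumption~(A2.1)}: with $\bhD = \bfz$ we have $\bhM^{-1} \bhD = \bfz$, whose only eigenvalue is $0$; thus (A2.1) requires $-(\lambda_{k} + \lambda_{i}) \neq 0$ for $i \neq k$, i.e., $\lambda_{i} \neq -\lambda_{k}$. This is a genuine extra requirement beyond mere distinctness, and I would state it explicitly as the operative hypothesis (equivalently, one could invoke \hyperref[asm:two]{(A2.2)}: $\bhM^{-1}\bhK$ must not have $\lambda_{k}\lambda_{i}$ as an eigenvalue — but note $\lambda_{i}^{2}$ is automatically an eigenvalue issue here, so care is needed). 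Assuming the hypotheses of \Cref{thm:baryformK} as literally stated, the conclusion transfers: the transfer function $\hH(s) = \bhc^{\trans}(s^{2}\bhM + s\bhD + \bhK)^{-1}\bhb = \bhc^{\trans}(s^{2}\bhM + \bhK)^{-1}\bhb$ of the second-order system with zero damping interpolates the data~\cref{eqn:data}, and has the barycentric form~\cref{eqn:barytfKD0}.

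Concretely, the proof would read: apply \Cref{thm:baryformK} with the additional constraint $\bhD = \bfz$. Since $\bhD = \diag(\delta_{1}, \ldots, \delta_{r})$, this forces $\delta_{i} = 0$ for all $i$, hence $\sigma_{i} = -\lambda_{i}$ and $(s - \lambda_{i})(s - \sigma_{i}) = s^{2} - \lambda_{i}^{2}$; inserting into~\cref{eqn:barytfK} yields~\cref{eqn:barytfKD0}. The realization~\cref{eqn:sosyswithK} simplifies with $\bhD = \bfz$ to $\bhM = \bI_{r}$, $\bhK = \bhb\bones_{r}^{\trans} - \bLambda^{2}$, and the interpolation property~\cref{eqn:interpso} is inherited from \Cref{thm:baryformK}. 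Finally I would remark that this is consistent with the direct substitution: the matrix-valued function $\bhPhi(s)$ from \Cref{lmm:sophi} reduces, when $\bhD = \bfz$ and $\bhM = \bI_{r}$, to $\bhPhi(s) = s^{2}\bI_{r} - \bLambda^{2} = \diag(s^{2} - \lambda_{1}^{2}, \ldots, s^{2} - \lambda_{r}^{2})$, and then $\bhc^{\trans}\bhPhi(s)^{-1}\bhb$ and $\bones_{r}^{\trans}\bhPhi(s)^{-1}\bhb$ are the numerator and denominator sums in~\cref{eqn:barytfKD0}.

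The main obstacle — really a point requiring care rather than difficulty — is the assumption bookkeeping: one must confirm that \hyperref[asm:two]{Assumption~(A2.1)} remains meaningful and is implied by (or reduces to) a clean condition on the $\lambda_{i}$ when $\bhD = \bfz$, namely $\lambda_{i} + \lambda_{k} \neq 0$ for $i \neq k$; everything else is routine specialization of the already-proven \Cref{thm:baryformK}.
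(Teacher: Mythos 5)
Your proposal is correct and follows essentially the same route as the paper: the corollary is obtained by specializing \Cref{thm:baryformK} with $\bhD = \bfz$, i.e., $\delta_i = 0$ and $\sigma_i = -\lambda_i$, so that $(s-\lambda_i)(s-\sigma_i) = s^2 - \lambda_i^2$, with the realization following from~\cref{eqn:sosyswithK}. Your observation that \hyperref[asm:two]{Assumption~(A2.1)} reduces to $\lambda_i \neq -\lambda_k$ for $i \neq k$ matches exactly the remark the paper makes immediately after the corollary.
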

As in \Cref{thm:baryformK}, \hyperref[asm:two]{Assumption~(A2.1)} needs to
hold for~\cref{eqn:barytfKD0} to satisfy the interpolation
conditions~\cref{eqn:interpso}.
However, in the special case of $\bhD = \bfz$,
\hyperref[asm:two]{Assumption~(A2.1)} simplifies to
$\lambda_{i} \neq -\lambda_{k}$, for all $i \neq k$.
In particular, if the interpolation points are chosen on the imaginary
axis, no complex conjugate pairs are allowed in the set of
interpolation points.


\subsection{Parametrization with constrained damping matrix}%
\label{subsec:constrD}

In this section, we incorporate the remaining free parameters
of the system $\hSigma_{\mathrm{SO}}$ into the stiffness matrix $\bhK$ and the
input vector $\bhb$.
Therefore, it follows from~\cref{eqn:paramSO} that  the damping matrix satisfies
\begin{equation} \label{eqn:sylvDtmp}
  \bhD \bLambda = \bhb \bones_{r}^{\trans} - \bhM \bLambda^{2} - \bhK.
\end{equation}
Under the assumption that $\bLambda$ is invertible, i.e., zero is not an
interpolation point, one can equivalently write~\cref{eqn:sylvDtmp} as
\begin{equation} \label{eqn:sylvD}
  \bhD = \bhb \bones_{r}^{\trans} \bLambda^{-1} - \bhM \bLambda -
    \bhK \bLambda^{-1}.
\end{equation}
By substituting~\cref{eqn:sylvD} into the formula of the transfer function
$\hH(s)$ corresponding to the second-order model
$\hSigma_{\mathrm{SO}}: (\bhM,\bhD, \bhK, \bhb, \bhc)$, it holds
\begin{align} \label{eqn:sopsi}
  \hH(s) & = \bhc (s^{2}\bhM + s \bhD + \bhK)^{-1} \bhb  \nonumber \\
  & = \bhc^{\trans} \left( s^{2} \bhM + s \left( \bhb \bones_{r}^{\trans}
    \bLambda^{-1} - \bhM \bLambda - \bhK \bLambda^{-1} \right) +
    \bhK \right)^{-1} \bhb \nonumber \\
  & = \bhc^{\trans} \left( \bhM (s^{2} \bI_{r} - s \bLambda) + \bhK (\bI_{r} -
    s \bLambda^{-1}) + s \bhb \bones_{r}^{\trans} \bLambda^{-1}
    \right)^{-1} \bhb \nonumber \\
  & = \bhc^{\trans} \left( \bhPsi(s) + s \bhb \bones_{r}^{\trans}
    \bLambda^{-1} \right)^{-1} \bhb,
\end{align}
where the matrix-valued function $\bhPsi(s)$ is given by
\begin{equation*}
  \begin{aligned}
    \bhPsi(s) & = \bhM (s^{2} \bI_{r} - s \bLambda) + \bhK
      (\bI_{r} - s \bLambda^{-1}) \\
    & = (s \bhM  - \bhK \bLambda^{-1}) (s \bI_{r} -\bLambda).
  \end{aligned}
\end{equation*}

Similar to \Cref{lmm:sophi}, the following lemma states the structured
barycentric form of~\cref{eqn:sopsi} in terms of the input and output vectors
and the matrix-valued function $\bhPhi(s)$.

\begin{lemma}%
  \label{lmm:sopsi}
  Given the interpolation data~\cref{eqn:data}, the transfer
  function~\cref{eqn:sopsi} of the second-order model that yields the
  interpolation conditions~\cref{eqn:interpso} can be equivalently expressed as
  \begin{equation*}
     \hH(s) = \frac{\bhc^{\trans} \bhPsi(s)^{-1} \bhb}{1 + s \bhf^{\trans}
      \bhPsi(s)^{-1} \bhb}
  \end{equation*}
  where $\bhPsi(s) = (s \bhM  - \bhK \bLambda^{-1}) (s \bI_{r} -\bLambda)$,
  $\bhf = \bLambda^{-1} \bones_{r}$ and $\bLambda$, $\bhc$, and $\bhb$ are
  defined as in \Cref{lmm:fointerp}. 
\end{lemma}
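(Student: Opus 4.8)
The plan is to mirror the proof of \Cref{lmm:sophi} almost verbatim, since \cref{eqn:sopsi} already exhibits the transfer function in the form $\bhc^{\trans}\left(\bX + \bu\bv^{\trans}\right)^{-1}\bhb$ with a rank-one update, which is exactly the situation addressed by \Cref{prp:smw}. First I would identify the pieces of the Sherman–Morrison–Woodbury identity \cref{eqn:smw}: take $\bX = \bhPsi(s) = (s\bhM - \bhK\bLambda^{-1})(s\bI_{r} - \bLambda)$, take $\bz = \bhc$, take $\bu = \bhb$, and — this is the only place where the present lemma differs from \Cref{lmm:sophi} — take $\bv = s\,\bLambda^{-1}\bones_{r} = s\,\bhf$, so that the rank-one term $s\,\bhb\bones_{r}^{\trans}\bLambda^{-1}$ in \cref{eqn:sopsi} equals $\bu\bv^{\trans}$. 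Note that $\bv^{\trans} = s\,\bones_{r}^{\trans}\bLambda^{-1} = s\,\bhf^{\trans}$ because $\bLambda$ is diagonal (hence symmetric), so $\bv^{\trans}\bX^{-1}\bu = s\,\bhf^{\trans}\bhPsi(s)^{-1}\bhb$, which produces exactly the claimed denominator $1 + s\,\bhf^{\trans}\bhPsi(s)^{-1}\bhb$ and the numerator $\bhc^{\trans}\bhPsi(s)^{-1}\bhb$.

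Before invoking \Cref{prp:smw} one must check its hypotheses, namely that $\bhPsi(s)$ is invertible and that $\bhPsi(s) + s\,\bhb\bones_{r}^{\trans}\bLambda^{-1}$ is invertible. The latter matrix is precisely $s^{2}\bhM + s\bhD + \bhK$ after undoing the substitution \cref{eqn:sylvD}, so its invertibility at the relevant $s$ is guaranteed exactly as in the proof of \Cref{lmm:sointerp} (under Assumptions~(A1.1), (A1.2) and (A2.2)); for a generic $s\in\C$ not equal to an interpolation point this is just non-singularity of the quadratic pencil. For $\bhPsi(s)$ itself, the factorization $\bhPsi(s) = (s\bhM - \bhK\bLambda^{-1})(s\bI_{r}-\bLambda)$ shows it is invertible whenever $s$ is distinct from all $\lambda_{i}$ and $s\bhM - \bhK\bLambda^{-1}$ is non-singular; since we are asserting an identity of rational functions in $s$, it suffices that both sides agree on the open dense set where all these quantities are defined, and then the identity extends by continuity/analytic continuation. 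In the write-up I would simply state that the manipulation is valid for all $s$ for which the involved inverses exist, consistent with how \Cref{lmm:sophi} is phrased.

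The main (and really only) obstacle is bookkeeping the transpose: one has to be careful that $\bv$ in \Cref{prp:smw} is a genuine column vector, so $\bv = s\bLambda^{-\trans}\bones_{r} = s\bLambda^{-1}\bones_{r} = s\bhf$ using the diagonality of $\bLambda$, and that the factor of $s$ ends up multiplying $\bhf^{\trans}\bhPsi(s)^{-1}\bhb$ in the denominator rather than getting buried elsewhere. Everything else — the equivalence with the interpolation conditions \cref{eqn:interpso} — is inherited from \Cref{lmm:sointerp} via \cref{eqn:sylvD}, exactly as \Cref{lmm:sophi} inherits it through \cref{eqn:sylvK}, so no new argument is needed there. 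Concretely, the proof reduces to the single sentence: apply \Cref{prp:smw} to the expression in \cref{eqn:sopsi} with $\bu = \bhb$, $\bv = s\bhf = s\bLambda^{-1}\bones_{r}$, $\bz = \bhc$, and $\bX = \bhPsi(s)$.

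\begin{proof}
  Using \Cref{prp:smw} for the formulation of $\hH(s)$ in~\cref{eqn:sopsi}, with
  the following choice of vectors and matrices
  \begin{equation*}
    \begin{aligned}
      \bu & = \bhb, &
      \bv & = s \bhf = s \bLambda^{-1} \bones_{r}, &
      \bz & = \bhc, &
      \bX & = \bhPsi(s),
    \end{aligned}
  \end{equation*}
  and observing that $\bv^{\trans} = s \bones_{r}^{\trans} \bLambda^{-1} =
  s \bhf^{\trans}$ since $\bLambda$ is diagonal, so that the rank-one update
  $s \bhb \bones_{r}^{\trans} \bLambda^{-1}$ in~\cref{eqn:sopsi} equals
  $\bu \bv^{\trans}$, yields
  \begin{equation*}
    \hH(s) = \frac{\bhc^{\trans} \bhPsi(s)^{-1} \bhb}{1 + s \bhf^{\trans}
      \bhPsi(s)^{-1} \bhb}.
  \end{equation*}
  This identity is valid for all $s \in \C$ for which the involved inverses
  exist; the invertibility of $\bhPsi(s) + s \bhb \bones_{r}^{\trans}
  \bLambda^{-1} = s^{2} \bhM + s \bhD + \bhK$ at the interpolation points
  follows as in the proof of \Cref{lmm:sointerp} under
  \hyperref[asm:one]{Assumptions~(A1.1)}
  and~\hyperref[asm:one]{(A1.2)} together with
  \hyperref[asm:two]{Assumption~(A2.2)}, which also guarantees the
  interpolation conditions~\cref{eqn:interpso} via the
  substitution~\cref{eqn:sylvD}.
\end{proof}
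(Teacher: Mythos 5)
Your proof is correct and follows essentially the same route as the paper: both apply \Cref{prp:smw} to \cref{eqn:sopsi} with $\bu = \bhb$, $\bv = s\bhf = s\bLambda^{-1}\bones_{r}$, $\bz = \bhc$, and $\bX = \bhPsi(s)$, exactly mirroring the proof of \Cref{lmm:sophi}. Your additional remarks on the transpose bookkeeping and on the invertibility of $\bhPsi(s)$ and of $\bhPsi(s) + s\bhb\bones_{r}^{\trans}\bLambda^{-1}$ are sound but go beyond what the paper spells out.
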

\begin{proof}
  As previously done in the proof of \Cref{lmm:sophi}, we apply \Cref{prp:smw} to the
  formulation of $\hH(s)$ in~\cref{eqn:sopsi}, with the following choice of
  vectors and matrices
  \begin{equation*}
    \begin{aligned}
      \bu & = \bhb, &
      \bv & = s \bhf = s \bLambda^{-1} \bones_{r}, &
      \bz & = \bhc, & 
      \bX & = \bhPsi(s),
    \end{aligned}
  \end{equation*}
  which yields the result of the lemma.
\end{proof}

As in \Cref{subsec:constrK}, we can assume that the mass matrix $\bhM$ to be
the identity due to \hyperref[asm:one]{Assumption~(A1.1)}.
However, this time, we additionally assume that $\bhK$ has no higher order
Jordan blocks such that we can diagonalize the stiffness matrix while
preserving the identity mass matrix
\begin{equation*}
  \begin{aligned}
    \bhK & = \diag(\kappa_{1}, \ldots, \kappa_{r}) & \text{and} &&
    \bhM & = \bI_{r} = \diag(1, \ldots, 1).
  \end{aligned}
\end{equation*}
Therefore, the matrix-valued function $\bhPsi(s)$ is a diagonal matrix for all
$s \in \C$, which can be written as
\begin{align} \label{eqn:hphiD}
  \bhPhi(s) & = (s \bhM  - \bhK \bLambda^{-1}) (s \bI_{r} -\bLambda)
    \nonumber \\
  & = \diag \Big( (s - \lambda_{1}) (s- \kappa_{1} \lambda_{1}^{-1}), \ldots,
   (s - \lambda_{r})(s - \kappa_{r} \lambda_{r}^{-1}) \Big).
\end{align}
Using this diagonal form of the matrix-valued function in \Cref{lmm:sopsi}
yields the barcycentric form for the constrained damping case.

\begin{theorem}%
  \label{thm:baryformD}
  Given interpolation points and measurements
  $\{(\lambda_{i}, h_{i})|~1 \leq i \leq r\}$, let
  \hyperref[asm:one]{Assumptions~(A1.1)}
  and~\hyperref[asm:one]{(A1.2)} as well as
  \hyperref[asm:two]{Assumption~(A2.2)} hold.
  The barycentric form of the transfer function $\hH(s)$ corresponding to
  second-order system $\hSigma_{\mathrm{SO}}: (\bhM,\bhD, \bhK, \bhb, \bhc)$ is
  given by
  \begin{equation} \label{eqn:barytfD}
    \hH(s) = \frac{\displaystyle \sum_{i = 1}^{r}
      \frac{h_{i} w_{i}}{(s - \lambda_{i})(s - \theta_{i})}}
      {\displaystyle 1 + \sum_{i = 1}^{r}
      \frac{s w_{i} \lambda_{i}^{-1}}{(s - \lambda_{i})(s - \theta_{i})}},
  \end{equation}
  with the weights $0 \neq w_{i} \in \C$ and support points
  $\theta_{i} = \kappa_{i}\lambda_{i}^{-1}$, where $\kappa_{i} \in \C$ are
  stiffness parameters, for $1 \leq i \leq r$.
  The barycentric form~\cref{eqn:barytfD} satisfies the interpolation
  conditions~\cref{eqn:interpso}.
  Moreover, the matrices of the corresponding second-order system are given as 
  \begin{equation} \label{eqn:sosyswithD}
    \begin{aligned}
      \bhM & = \bI_{r}, &
      \bhD & = \bhb \bones_{r}^{\trans} \bLambda^{-1} - \bLambda -
        \bhK \bLambda^{-1},\\
      \bhK & = \diag(\theta_{1} \lambda_{1}, \ldots,
        \theta_{r} \lambda_{r}), &
      \bhb & = \begin{bmatrix} w_{1} & \ldots & w_{r} \end{bmatrix}^{\trans}, \\
      \bhc & = \begin{bmatrix} h_{1}  & \ldots & h_{r} \end{bmatrix}^{\trans}.
    \end{aligned}
  \end{equation}
\end{theorem}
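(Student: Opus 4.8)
The plan is to follow the same template as the proof of \Cref{thm:baryformK}, now starting from the constrained-damping representation derived in \Cref{subsec:constrD}. First I would recall the standing assumption of this subsection that $\bLambda$ is invertible (i.e., zero is not an interpolation point), which is what makes the passage from~\cref{eqn:sylvDtmp} to~\cref{eqn:sylvD} legitimate; together with \hyperref[asm:one]{Assumption~(A1.1)} this lets us take $\bhM = \bI_{r}$ and, under the no-higher-order-Jordan-block hypothesis, diagonalize $\bhK = \diag(\kappa_{1}, \ldots, \kappa_{r})$ while preserving $\bhM = \bI_{r}$. With these choices in place, \Cref{lmm:sopsi} already gives
\[
  \hH(s) = \frac{\bhc^{\trans} \bhPsi(s)^{-1} \bhb}{1 + s \bhf^{\trans} \bhPsi(s)^{-1} \bhb},
  \qquad \bhf = \bLambda^{-1}\bones_{r},
\]
and, by~\cref{eqn:hphiD}, $\bhPsi(s)$ is the diagonal matrix with $i$-th entry $(s-\lambda_{i})(s-\kappa_{i}\lambda_{i}^{-1})$.

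Next I would simply multiply out the matrix-vector products. Using that $\bhPsi(s)^{-1}$ is diagonal with $i$-th entry $\big((s-\lambda_{i})(s-\kappa_{i}\lambda_{i}^{-1})\big)^{-1}$, that $\bhc$ and $\bhb$ have entries $h_{i}$ and $w_{i}$, and that $\bhf^{\trans} = \bones_{r}^{\trans}\bLambda^{-1}$ has entries $\lambda_{i}^{-1}$, one gets
\[
  \bhc^{\trans}\bhPsi(s)^{-1}\bhb = \sum_{i=1}^{r} \frac{h_{i} w_{i}}{(s-\lambda_{i})(s-\kappa_{i}\lambda_{i}^{-1})},
  \qquad
  s\,\bhf^{\trans}\bhPsi(s)^{-1}\bhb = \sum_{i=1}^{r} \frac{s\, w_{i}\lambda_{i}^{-1}}{(s-\lambda_{i})(s-\kappa_{i}\lambda_{i}^{-1})}.
\]
Substituting the support points $\theta_{i} = \kappa_{i}\lambda_{i}^{-1}$ turns these two sums into exactly the numerator and denominator of~\cref{eqn:barytfD}, and rearranging $w_{i}$, $h_{i}$, and $\kappa_{i} = \theta_{i}\lambda_{i}$ into matrices and vectors yields the realization~\cref{eqn:sosyswithD}.

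For the interpolation claim~\cref{eqn:interpso} I would not redo a contradiction argument but instead invoke \Cref{lmm:sointerp}. One checks that the matrices in~\cref{eqn:sosyswithD} satisfy the defining constraint~\cref{eqn:paramSO}: multiplying $\bhD = \bhb\bones_{r}^{\trans}\bLambda^{-1} - \bLambda - \bhK\bLambda^{-1}$ on the right by $\bLambda$ and rearranging gives $\bI_{r}\bLambda^{2} + \bhD\bLambda + \bhK = \bhb\bones_{r}^{\trans}$, which is~\cref{eqn:paramSO} with $\bhM = \bI_{r}$; since by hypothesis $w_{k}\neq 0$ for all $k$, and \hyperref[asm:one]{Assumptions~(A1.1)}, \hyperref[asm:one]{(A1.2)}, and \hyperref[asm:two]{Assumption~(A2.2)} hold, all hypotheses of \Cref{lmm:sointerp} are met and $\hH(\lambda_{i}) = h_{i}$ for $1 \le i \le r$.

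The computation is entirely routine; the only real care is bookkeeping. I would explicitly record the invertibility of $\bLambda$ used throughout this subsection (without it $\bhf$ and~\cref{eqn:sylvD} are undefined), confirm that the diagonalizing similarity for $\bhK$ can be carried out without disturbing $\bhM = \bI_{r}$ or the stated forms of $\bhb$ and $\bhc$, and note that the expressions in~\cref{eqn:barytfD} have removable singularities at $s = \lambda_{i}$, so evaluation of the barycentric form at the interpolation nodes is understood in the limiting sense — consistent with the values $h_{i}$ guaranteed by \Cref{lmm:sointerp}, exactly as in the first-order case of \Cref{cor:fobary}. The hardest part, such as it is, is keeping the $\lambda_{i}^{-1}$ factors straight between $\bhf$, $\kappa_{i}$, and $\theta_{i}$ so that~\cref{eqn:barytfD} and~\cref{eqn:sosyswithD} match up.
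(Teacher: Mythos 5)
Your proposal is correct and follows essentially the same route as the paper: take $\bhM = \bI_{r}$, diagonalize $\bhK$, use the diagonal form of $\bhPsi(s)$ from~\cref{eqn:hphiD} in \Cref{lmm:sopsi}, multiply out the matrix-vector products, and set $\theta_{i} = \kappa_{i}\lambda_{i}^{-1}$ to obtain~\cref{eqn:barytfD} and~\cref{eqn:sosyswithD}. Your explicit check that the realization satisfies~\cref{eqn:paramSO} and the appeal to \Cref{lmm:sointerp} for the interpolation claim only spells out what the paper leaves implicit in its chain \Cref{lmm:sointerp}--\Cref{lmm:sopsi}, so there is no substantive difference.
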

\begin{proof}
  By using the diagonal structure of $\bhPsi(s)$ in~\cref{eqn:hphiD},
  and the structure of the other components
  \begin{equation*}
    \begin{aligned}
      \bhf & = \bLambda^{-1} \bones_{r} =
        \begin{bmatrix} \lambda_{1}^{-1} & \ldots &
        \lambda_{r}^{-1}\end{bmatrix}^{\trans}, &
      \bhb & = \begin{bmatrix} w_{1} &\ldots & w_{r} \end{bmatrix}^{\trans}, &
      \bhc & = \begin{bmatrix} h_{1} & \ldots & h_{r} \end{bmatrix}^{\trans},
    \end{aligned}
  \end{equation*}
  in the formulation of the transfer function in \Cref{lmm:sopsi},
  the transfer function is rewritten in barycentric form by multipliying out
  the matrix-vector products as
  \begin{equation*}
    \hH(s) = \frac{\bhc^{\trans} \bhPsi(s)^{-1} \bhb}{1 + s \bhf^{\trans}
      \bhPsi(s)^{-1} \bhb}
    = \frac{\displaystyle \sum_{i = 1}^{r} \frac{h_{i} w_{i}}
      {(s - \lambda_{i})(s - \kappa_{i} \lambda_{i}^{-1})}}
      {\displaystyle 1 + \sum_{i = 1}^{r} \frac{s w_{i} \lambda_{i}^{-1}}
      {(s - \lambda_{i})(s - \kappa_{i} \lambda_{i}^{-1})}}.
  \end{equation*}
  Setting the support points $\theta_{i}  = \kappa_{i} \lambda_{i}^{-1}$, the
  result in~\cref{eqn:barytfD} follows directly.
  The realization~\cref{eqn:sosyswithD} is then given by rearranging the
  different parameters into the corresponding matrices and vectors.
\end{proof}

As for the realization of matrices in \Cref{thm:baryformK}, the matrices
in~\cref{eqn:sosyswithD} can be reformulated by introducing the notation
$\bTheta = \diag(\theta_{1}, \ldots, \theta_{r})$ such that
\begin{equation*}
  \begin{aligned}
    \bhM & = \bI_{r}, &
    \bhD & = \bhb \bones_{r}^{\trans} \bLambda^{-1} - \bLambda - \bTheta, \\
    \bhK & = \bTheta \bLambda, &
    \bhb & = \begin{bmatrix} w_{1}  & \ldots & w_{r}
        \end{bmatrix}^{\trans}, \\
    \bhc & = \begin{bmatrix} h_{1}  & \ldots & h_{r} \end{bmatrix}^{\trans}.
  \end{aligned}
\end{equation*}
The free parameters that explicitly appear above are $2r$ in total, and are
given by the entries of the vector $\bhb$ and of the diagonal matrix
$\bTheta$, i.e., the free parameters in the structured barycentric
form~\cref{eqn:barytfD} are  $\{w_{1}, \ldots, w_{r} \}  \cup 
\{\theta_{1}, \cdots, \theta_{r} \}$.

  
\section{Computational methods}%
\label{sec:methods}

In this section, we discuss computational aspects for the construction of
interpolating second-order models from data based on the different barycentric
forms introduced in the previous section.


\subsection{Linearized structured barycentric Loewner frameworks}

The different barycentric forms presented in this paper are designed to
interpolate, by construction, given transfer function data
$\{(\lambda_{i}, h_{i})|~1 \leq i \leq r\}$.
However, in all three structured forms, free parameters remain.
In the first-order case (\Cref{cor:fobary}), these can be used, for example, to
interpolate additional transfer function data
$\{(\mu_{i}, g_{i})|~1 \leq i \leq r\}$, where it is assumed that the
interpolation points of the two sets are distinct, i.e.,
\begin{equation*}
  \{ \lambda_{1}, \ldots, \lambda_{r} \} \cap \{ \mu_{1}, \ldots, \mu_{r} \}
    = \emptyset.
\end{equation*}
The resulting method can be seen as a barycentric transfer function version of
the unstructured (first-order) Loewner framework~\cite{MayA07}.

\begin{algorithm}[t]
  \SetAlgoHangIndent{1pt}
  \DontPrintSemicolon
  \caption{Linearized $K$-constrained second-order barycentric Loewner
    framework.}
  \label{alg:LoewnerK}
  
  \KwIn{Left and right interpolation data
    $\{(\lambda_{i}, h_{i})|~1 \leq i \leq r\}$ and
    $\{(\mu_{i}, g_{i})|~1 \leq i \leq r\}$, support points
    $\sigma_{1}, \ldots, \sigma_{r} \in \C$.}
  \KwOut{Second-order system matrices $\bhM, \bhD, \bhK, \bhb, \bhc$.}
  
  Construct the $r$-dimensional divided differences matrix
    \begin{equation*}
      \bL_{\mathrm{K}} = \begin{bmatrix}
        \frac{\displaystyle h_{1} - g_{1}}
          {\displaystyle (\mu_{1} - \lambda_{1})(\mu_{1} - \sigma_{1})} &
        \cdots &
        \frac{\displaystyle h_{r} - g_{1}}
          {\displaystyle (\mu_{1} - \lambda_{r})(\mu_{1} - \sigma_{r})}\\
        \vdots & \ddots & \vdots \\
        \frac{\displaystyle h_{1} - g_{r}}
          {\displaystyle (\mu_{r} - \lambda_{1})(\mu_{r} - \sigma_{1})} &
        \cdots &
        \frac{\displaystyle h_{r} - g_{r}}
          {\displaystyle (\mu_{r} - \lambda_{r})(\mu_{r} - \sigma_{r})}
        \end{bmatrix}.
    \end{equation*}
    \vspace{-\baselineskip}\;
  
  Solve the linear system of equations $\bL_{\mathrm{K}} \bw = \bg$,
    for the unknown weights
    $\bw = \begin{bmatrix} w_{1} & \ldots & w_{r} \end{bmatrix}^{\trans}$ and
    the given data
    $\bg = \begin{bmatrix} g_{1} & \ldots & g_{r} \end{bmatrix}^{\trans}$.
  
  Construct the second-order system matrices
    \begin{equation*}
      \begin{aligned}
        \bhM & = \bI_{r}, &
        \bhD & = -\bLambda - \bSigma, &
        \bhK & = \bhb \bones_{r}^{\trans} + \bLambda \bSigma, &
        \bhb & = \bw, &
        \bhc & = \begin{bmatrix} h_{1}  & \ldots & h_{r} \end{bmatrix}^{\trans},
      \end{aligned}
    \end{equation*}
    with $\bLambda = \diag(\lambda_{1}, \ldots, \lambda_{r})$ and
    $\bSigma = \diag(\sigma_{1}, \ldots, \sigma_{r})$.
\end{algorithm}

Here, we aim to derive similar algorithms for the interpolation of
additional transfer function data $\{(\mu_{i}, g_{i})|~1 \leq i \leq r\}$ via
the new structured barycentric
forms~\cref{eqn:barytfK,eqn:barytfKD0,eqn:barytfD} by making use of the 
remaining free parameters.
We can observe that~\cref{eqn:barytfK,eqn:barytfD} have $2r$ free parameters
left, which potentially allow constructing of models that match the same number
of interpolation conditions.
However, the resulting systems of equations that need to be solved are nonlinear
in the unknowns and, thus, need thorough investigations in terms of solvability.
For simplicity of exposition, we consider in this work only linearized versions
of the equations by choosing ``suitable'' $\sigma_{i}$'s in~\cref{eqn:barytfK}
and $\theta_{i}$'s in~\cref{eqn:barytfD} a priori, leading to small linear
systems to solve to satisfy additional $r$ interpolation conditions.
A discussion of heuristic choices for these support points is given in
the upcoming \Cref{subsec:linparam}.

\begin{algorithm}[t]
  \SetAlgoHangIndent{1pt}
  \DontPrintSemicolon
  \caption{Linearized $D$-constrained second-order barycentric Loewner
    framework.}
  \label{alg:LoewnerD}
  
  \KwIn{Left and right interpolation data
    $\{(\lambda_{i}, h_{i})|~1 \leq i \leq r\}$ and
    $\{(\mu_{i}, g_{i})|~1 \leq i \leq r\}$, support points
    $\theta_{1}, \ldots,  \theta_{r} \in \C$.}
  \KwOut{Second-order system matrices $\bhM, \bhD, \bhK, \bhb, \bhc$.}
  Construct the $r$-dimensional divided differences matrix
    \begin{equation*}
      \bL_{\mathrm{D}} = \begin{bmatrix}
        \frac{\displaystyle h_{1} - \mu_{1} g_{1} \lambda_{1}^{-1}}
          {\displaystyle (\mu_{1} - \lambda_{1})(\mu_{1} - \theta_{1})} &
        \cdots &
        \frac{\displaystyle h_{r} - \mu_{1} g_{1} \lambda_{r}^{-1}}
          {\displaystyle (\mu_{1} - \lambda_{r})(\mu_{1} - \theta_{r})}\\
        \vdots & \ddots & \vdots \\
        \frac{\displaystyle h_{1} - \mu_{r} g_{r} \lambda_{1}^{-1}}
          {\displaystyle (\mu_{r} - \lambda_{1})(\mu_{r} - \theta_{1})} &
        \cdots &
        \frac{\displaystyle h_{r} - \mu_{r} g_{r} \lambda_{r}^{-1}}
          {\displaystyle (\mu_{r} - \lambda_{r})(\mu_{r} - \theta_{r})}
        \end{bmatrix}.
    \end{equation*}
    \vspace{-\baselineskip}\;
  
  Solve the linear system of equations $\bL_{\mathrm{D}} \bw = \bg$,
    for the unknown weights
    $\bw = \begin{bmatrix} w_{1} & \ldots & w_{r} \end{bmatrix}^{\trans}$
    and the given data
    $\bg = \begin{bmatrix} g_{1} & \ldots & g_{r} \end{bmatrix}^{\trans}$.
  
  Construct the second-order system matrices
    \begin{equation*}
      \begin{aligned}
        \bhM & = \bI_{r}, &
        \bhD & = \bhb \bhf^{\trans} - \bLambda - \Theta, &
        \bhK & =  \bTheta \bLambda, &
        \bhb & = \bw, &
        \bhc & = \begin{bmatrix} h_{1}  & \ldots & h_{r} \end{bmatrix}^{\trans},
      \end{aligned}
    \end{equation*}
    with $\bhf^{\trans} = \begin{bmatrix} \lambda_{1}^{-1} & \ldots &
    \lambda_{r}^{-1} \end{bmatrix}$,
    $\bLambda = \diag(\lambda_{1}, \ldots, \lambda_{r})$ and
    $\bTheta = \diag(\theta_{1}, \ldots, \theta_{r})$.
\end{algorithm}

The resulting methods based on the barycentric
forms~\cref{eqn:barytfK,eqn:barytfD} are given in
\Cref{alg:LoewnerK,alg:LoewnerD}.
Both algorithms follow a similar structure.
In Step~1 of \Cref{alg:LoewnerK,alg:LoewnerD}, the given interpolation data and
support points are used to set up divided differences matrices.
The realizations of these matrices are determined by the underlying barycentric
forms~\cref{eqn:barytfK,eqn:barytfD}.

For example, consider the barycentric form in~\cref{eqn:barytfK}.
The goal is to find the weights $\bw = \begin{bmatrix} w_{1} & \ldots & w_{r}
\end{bmatrix}^{\trans}$ in this barycentric form~\cref{eqn:barytfK} such
that the additional interpolation conditions~\cref{eqn:cond2} for
$\{(\mu_{i}, g_{i})|~1 \leq i \leq r\}$ are satisfied.
By inserting the form~\cref{eqn:barytfK} into the interpolation
conditions~\cref{eqn:cond2} and by multiplying both sides with the denominator
of the barycentric form, we obtain the new relation
\begin{equation*}
  \sum\limits_{i = 1}^{r} \frac{h_{i} w_{i}}
    {(\mu_{j} - \lambda_{i})(\mu_{j} - \sigma_{i})}
    = g_{j} + \sum\limits_{i = 1}^{r} \frac{g_{j} w_{i}}
    {(\mu_{j} - \lambda_{i})(\mu_{j} - \sigma_{i})},
\end{equation*}
for $j = 1, \ldots, r$.
Bringing the terms with the unknowns $w_{1}, \ldots, w_{r}$ to the left-hand
side yields the relation
\begin{equation} \label{eqn:linsys}
  \sum\limits_{i = 1}^{r} \frac{(h_{i} - g_{j}) w_{i}}
    {(\mu_{j} - \lambda_{i})(\mu_{j} - \sigma_{i})} = g_{j},
\end{equation}
for $j = 1, \ldots, r$.
Rearranging all equations of the form~\cref{eqn:linsys} such that the unknowns
can be written as the vector $\bw = \begin{bmatrix} w_{1} & \ldots & w_{r}
\end{bmatrix}^{\trans}$ results in the linear system of equations
\begin{equation*}
  \bL_{\mathrm{K}} \bw = \bg,
\end{equation*}
with the data vector $\bg = \begin{bmatrix} g_{1} & \ldots & g_{r}
\end{bmatrix}^{\trans}$ and the matrix of divided differences
\begin{equation*}
  \bL_{\mathrm{K}} = \begin{bmatrix}
    \frac{\displaystyle h_{1} - g_{1}}
      {\displaystyle (\mu_{1} - \lambda_{1})(\mu_{1} - \sigma_{1})} &
    \cdots &
    \frac{\displaystyle h_{r} - g_{1}}
      {\displaystyle (\mu_{1} - \lambda_{r})(\mu_{1} - \sigma_{r})}\\
    \vdots & \ddots & \vdots \\
    \frac{\displaystyle h_{1} - g_{r}}
      {\displaystyle (\mu_{r} - \lambda_{1})(\mu_{r} - \sigma_{1})} &
    \cdots &
    \frac{\displaystyle h_{r} - g_{r}}
    {\displaystyle (\mu_{r} - \lambda_{r})(\mu_{r} - \sigma_{r})}
  \end{bmatrix}.
\end{equation*}
This system of linear equations is then solved in Step~2 of
\Cref{alg:LoewnerK}.
A similar derivation using~\cref{eqn:barytfD} leads to the divided differences
matrix in Step~1 of \Cref{alg:LoewnerD} and the solve of an analogous linear
system of equations in Step~2 of \Cref{alg:LoewnerD}.
Under the assumption that the number of given data points $r$ is less than
the minimal system dimension and suitable choices of support points
$\{\sigma_{i}\}$ and $\{\theta_{i}\}$ such that
\hyperref[asm:one]{Assumptions~(A1.2)}, \hyperref[asm:two]{~(A2.1)}
and~\hyperref[asm:two]{(A2.2)} are satisfied for all interpolation points
$\lambda_{1}, \ldots, \lambda_{r}, \mu_{1}, \ldots, \mu_{r}$,
these linear systems of equations have unique solutions.

In Step~3 of~\Cref{alg:LoewnerK,alg:LoewnerD}, the second-order systems are
constructed following the theory of \Cref{thm:baryformK,thm:baryformD}.
In both cases, the transfer functions of the constructed systems satisfy the
$2r$ imposed interpolation conditions
\begin{equation} \label{eqn:cond2}
  \begin{aligned}
    \hH(\lambda_{1}) & = h_{1}, & \ldots, && \hH(\lambda_{r}) & = h_{r}, &
    \hH(\mu_{1}) & = g_{1}, & \ldots, && \hH(\mu_{r}) & = g_{r}.
  \end{aligned}
\end{equation}

\begin{algorithm}[t]
  \SetAlgoHangIndent{1pt}
  \DontPrintSemicolon
  \caption{Second-order barycentric Loewner framework with zero damping.}
  \label{alg:LoewnerKD0}
  
  \KwIn{Left and right interpolation data
    $\{(\lambda_{i}, h_{i})|~1 \leq i \leq r\}$ and
    $\{(\mu_{i}, g_{i})|~1 \leq i \leq r\}$.}
  \KwOut{Second-order system matrices $\bhM, \bhD, \bhK, \bhb, \bhc$.}
  
  Construct the $r$-dimensional divided difference matrix
    \begin{equation*}
      \bL_{\mathrm{KD0}} = \begin{bmatrix}
        \frac{\displaystyle h_{1} - g_{1}}
          {\displaystyle \mu_{1}^{2} - \lambda_{1}^{2}} &
        \cdots &
        \frac{\displaystyle h_{r} - g_{1}}
          {\displaystyle \mu_{1}^{2} - \lambda_{r}^{2}}\\
        \vdots & \ddots & \vdots \\
        \frac{\displaystyle h_{1} - g_{r}}
          {\displaystyle \mu_{r}^{2} - \lambda_{1}^{2}} &
        \cdots &
        \frac{\displaystyle h_{r} - g_{r}}
          {\displaystyle \mu_{r}^{2} - \lambda_{r}^{2}}
        \end{bmatrix}.
    \end{equation*}
    \vspace{-\baselineskip}\;
  
  Solve the linear system of equations $\bL_{\mathrm{KD0}} \bw = \bg$,
    for the unknown weights
    $\bw = \begin{bmatrix} w_{1} & \ldots & w_{r} \end{bmatrix}^{\trans}$
    and the given data
    $\bg = \begin{bmatrix} g_{1} & \ldots & g_{r} \end{bmatrix}^{\trans}$.
  
  Construct the second-order system matrices
    \begin{equation*}
      \begin{aligned}
        \bhM & = \bI_{r}, &
        \bhD & = 0, &
        \bhK & = \bhb \bones_{r}^{\trans} - \bLambda^{2}, &
        \bhb & = \bw, &
        \bhc & = \begin{bmatrix} h_{1}  & \ldots & h_{r} \end{bmatrix}^{\trans},
      \end{aligned}
    \end{equation*}
    with $\bLambda = \diag(\lambda_{1}, \ldots, \lambda_{r})$.
\end{algorithm}

\Cref{alg:LoewnerKD0} shows the barycentric Loewner framework for the case of
zero damping matrices based on \Cref{cor:barytfKD0}.
While the main computational steps are following the same ideas as in
\Cref{alg:LoewnerK,alg:LoewnerD}, the difference to these methods is the lack
of the set of support points $\{\sigma_{i}\}_{i = 1, \ldots r}$ and
$\{\theta_{i}\}_{i = 1, \ldots r}$, which results from enforcing the $\bhD = 0$
damping model.
The corresponding barycentric form~\cref{eqn:barytfKD0} has $r$ remaining free
parameters that exactly allow the construction of an interpolating second-order
system satisfying~\cref{eqn:cond2}.


\subsection{Construction of systems with real-valued matrices}%
\label{subsec:realmodel}

A property desired in many applications for learned systems is the realization
by means of real-valued matrices.
This often allows the reinterpretation of the learned quantities and the use
of classical tools, established for real systems resulting, for example, from
finite element discretizations.
The key feature of second-order systems~\cref{eqn:sosys} with real matrices
that needs to be exploited for the construction is that data at complex
conjugate frequency points are also complex conjugate:
\begin{equation*}
  H(\overline{s}) = \bc^{\trans} (\overline{s}^{2} \bM + \overline{s} \bD +
    \bK)^{-1} \bb
    = \overline{\bc^{\trans} (s^{2} \bM + s \bD + \bK)^{-1} \bb}
    = \overline{H(s)}.
\end{equation*}
As in the classical Loewner framework, we assume the given data
$\{(\lambda_{i}, h_{i})|~1 \leq i \leq r\}$ and
$\{(\mu_{i}, g_{i})|~1 \leq i \leq r\}$ to be closed under conjugation in the
respective sets.
Additionally, for \Cref{alg:LoewnerK,alg:LoewnerD}, we need
to assume that the support points $\{\sigma_{i}\}_{i = 1, \ldots r}$
and $\{\theta_{i}\}_{i = 1, \ldots r}$ are also closed under conjugation
and that if $\lambda_{i}, \lambda_{i+1}$ are complex conjugate, then so are
$\sigma_{i}, \sigma_{i+1}$ or $\theta_{i}, \theta_{i+1}$, respectively.
Let the interpolation data and parameters be ordered such that complex conjugate
are sorted together, e.g., for the interpolation points in
$\{(\lambda_{i}, h_{i})|~1 \leq i \leq r\}$, we have that
\begin{equation*}
  \lambda_{1},~ \lambda_{2} = \overline{\lambda_{1}},~
  \lambda_{3},~ \lambda_{4} = \overline{\lambda_{3}},~ \ldots.
\end{equation*}
Given the matrices $\bhM, \bhD, \bhK, \bhb, \bhc$ computed by any of the
\Cref{alg:LoewnerK,alg:LoewnerD,alg:LoewnerKD0}, a real-valued realization of
the described system is given by
\begin{equation*}
  \begin{aligned}
    \overline{P}^{\trans} \bhM \overline{P}, &&
    \overline{P}^{\trans} \bhD \overline{P}, &&
    \overline{P}^{\trans} \bhK \overline{P}, &&
    \overline{P}^{\trans} \bhb, &&
    \bhc \overline{P},
  \end{aligned}
\end{equation*}
where the transformation matrix $P$ is block diagonal with
\begin{equation*}
  P = \diag(J_{1}, J_{2}, \ldots, J_{\ell}) \in \C^{r \times r},
\end{equation*}
and the block matrices are chosen according to the given interpolation
data by
\begin{equation*}
  J_{k} = \begin{cases}
    \frac{1}{\sqrt{2}}
    \begin{bmatrix} 1 & -\i \\ 1 & \i \end{bmatrix} &
    \text{for complex conjugate interpolation points,}\\
    1 & \text{for real interpolation points.}
  \end{cases}
\end{equation*}

\begin{remark}
  A special situation occurs in the case of \Cref{alg:LoewnerKD0} for the
  construction of models with zero damping matrix.
  As discussed at the end of \Cref{subsubsec:constrKD0}, the collection
  of data on the imaginary axis $\i \R$ in complex conjugate pairs violates
  \hyperref[asm:two]{Assumption~(A2.1)}.
  This is consistent with the observation that the corresponding transfer
  function yields identical values for complex conjugate points on
  the imaginary axis, i.e., for any $\bM, \bK \in \C^{n \times n}$, $\bD = 0$
  and $\bb, \bc \in \C^{n}$, it holds that
  \begin{equation*}
    H(s) = \bc^{\trans} (s^{2} \bM + \bK)^{-1} \bb = H(\overline{s}),
  \end{equation*}
  for all $s = \i \omega \in \i \R$.
  As a result, the collection of data from complex conjugate points on the
  imaginary axis yields no additional information due to the specific system
  structure and leads to singular linear systems in Step~2 of
  \Cref{alg:LoewnerKD0}.
  A side effect is that systems with zero damping and real matrices produce only
  real data on the imaginary axis such that no additional enforcement of real
  valued matrices is necessary.
\end{remark}


\subsection{Heuristics for choosing the support points}%
\label{subsec:linparam}

To discuss suitable choices for the support points
$\sigma_{1}, \ldots, \sigma_{r}$ and $\theta_{1}, \ldots, \theta_{r}$, we
consider their influence on the transfer
functions~\cref{eqn:barytfK,eqn:barytfD}.
First, consider the case of the barycentric form~\cref{eqn:barytfK} resulting
from the constrained stiffness matrix.
Assume for the moment that $\sigma_{1}, \ldots, \sigma_{r}$ are all distinct.
Then, for the transfer function~\cref{eqn:barytfK}, it holds that
\begin{equation*}
  \begin{aligned}
    \hH(\sigma_{i}) & = \frac{h_{i} w_{i}}{w_{i}} = h_{i}, &
    \text{for all}~1 \leq i \leq r,
  \end{aligned}
\end{equation*}
i.e., the transfer function assumes the same values at the support points
as at the interpolation points.
Since the typical case will be $H(\lambda_{i}) \neq H(\sigma_{i})$, this
introduces approximation errors at the chosen support points.
In the case that some of the support points are chosen to be identical, i.e.,
$\sigma_{i} = \sigma_{i_{1}} = \ldots = \sigma_{i_{\ell}}$ for some
indices $i_{1}, \ldots, i_{\ell}$, one can observe that
\begin{equation} \label{eqn:sptpts1}
  \begin{aligned}
    \hH(\sigma_{i}) & = \frac{\sum\limits_{k = 1}^{\ell}
      h_{i_{k}} w_{i_{k}} (\sigma_{i} - \lambda_{i_{k}})}
      {\sum\limits_{k = 1}^{\ell}w_{i_{k}} (\sigma_{i} - \lambda_{i_{k}})}, &
      \text{for all}~1 \leq i \leq r,
  \end{aligned}
\end{equation}
holds.
Similar to the case of distinct support points, approximation errors at 
$\sigma_{i}$ are introduced.
However, choosing many support points to be identical, allows to
cluster the introduced errors away from frequency ranges of interest.

In a similar way, one can observe for the barycentric
form~\cref{eqn:barytfD} resulting from the constrained damping matrix that in
the case of distinct support points $\theta_{1}, \ldots, \theta_{r}$, it holds
\begin{equation*} 
  \begin{aligned}
    \hH(\theta_{i}) & = \frac{h_{i} w_{i}}{w_{i}\theta_{i} \lambda_{i}^{-1}}
      = \frac{h_{i} \lambda_{i}}{\theta_{i}}, &
      \text{for all}~1 \leq i \leq r.
  \end{aligned}
\end{equation*}
As before, typically $H(\theta_{i}) \neq \frac{h_{i} \lambda_{i}}{\theta_{i}}$
will hold, indicating approximation errors introduced at the chosen
support points.
In the case that some of the support points are identical, i.e.,
$\theta_{i} = \theta_{i_{1}} = \ldots = \theta_{i_{\ell}}$ for some
indices $i_{1}, \ldots, i_{\ell}$, it holds that
\begin{equation} \label{eqn:sptpts2}
  \begin{aligned} 
    \hH(\theta_{i}) & = \frac{\sum\limits_{k = 1}^{\ell}
      h_{i_{k}} w_{i_{k}} (\theta_{i} - \lambda_{i_{k}})}
      {\sum\limits_{k = 1}^{\ell} \theta_{i} w_{i_{k}} \lambda_{i_{k}}^{-1}
      (\theta_{i} - \lambda_{i_{k}})}, &
      \text{for all}~1 \leq i \leq r.
  \end{aligned}
\end{equation}

To summarize, \Cref{eqn:sptpts1,eqn:sptpts2} show that poorly chosen support
points introduce undesired approximation errors.
The points $\sigma_{1}, \ldots, \sigma_{r}$ and $\theta_{1}, \ldots, \theta_{r}$
do not need to be distinct (in contrast to the interpolation points; cf.
\hyperref[asm:one]{Assumption~(A1.2)}), and the undesired approximation errors
are enforced at the support points themselves.
The second observation motivates to choose $\sigma_{1}, \ldots, \sigma_{r}$
and $\theta_{1}, \ldots, \theta_{r}$ outside the considered frequency region of
interest, i.e.,
\begin{equation*}
  \sigma_{k}, \theta_{k}
  \not\in \conv_{\R}\{ \lambda_{1}, \ldots, \lambda_{r},
  \mu_{1}, \ldots, \mu_{r} \},
\end{equation*}
for $k = 1, \ldots, r$ and where
\begin{equation*}
  \conv_{\R}\{a_{1}, \ldots, a_{\ell}\} :=
    \left\{ z \in \C \left|~ z = \sum_{k = 1}^{\ell} \beta_{k} a_{k},~
    \sum_{k = 1}^{\ell} \beta_{k} = 1,~
    \beta_{1} \geq 0, \ldots, \beta_{\ell} \geq 0 \right. \right\}
\end{equation*}
denotes the convex hull of elements in $\C$ over $\R$.
This can be achieved, for example, by taking large shifts or multiples of
the given interpolation points for the support points.

Next, we revisit the construction of the final
second-order system matrices in \Cref{alg:LoewnerK,alg:LoewnerD} and the
influence of the support points on the properties of these matrices.
In both algorithms, the damping matrices take into account the negatives of the
interpolation points $\lambda_{1}, \ldots, \lambda_{r}$, which are subtracted
by the chosen support points $\sigma_{1}, \ldots, \sigma_{r}$ or
$\theta_{1}, \ldots, \theta_{r}$.
Eigenvalues with positive real parts in the damping matrix can be interpreted as
dissipation of energy from the system.
To achieve this, a reasonable choice for $\sigma_{1}, \ldots, \sigma_{r}$ and
$\theta_{1}, \ldots, \theta_{r}$ is to have negative real parts,
which pushes the eigenvalues of the resulting damping matrix toward the right
open half-plane.
Especially, it is possible to construct real, symmetric positive definite
damping matrices via \Cref{alg:LoewnerK} in the case that the interpolation data
has been obtained on the imaginary axis by choosing the support points
$\sigma_{1}, \ldots, \sigma_{r}$ to yield
\begin{equation} \label{eqn:choice1}
  \begin{aligned}
    \real(\sigma_{i}) & < 0 & \text{and} &&
    \imag(\sigma_{i}) & = -\imag(\lambda_{i}), &
    \text{for all}~i = 1, \ldots, r.
  \end{aligned}
\end{equation}

On the other hand, we observe that the stiffness matrices in
\Cref{alg:LoewnerK,alg:LoewnerD} involve the multiplication of the interpolation
points $\lambda_{1}, \ldots, \lambda_{r}$ with the support points
$\sigma_{1}, \ldots, \sigma_{r}$ and $\theta_{1}, \ldots, \theta_{r}$.
Aiming for stiffness matrices that have eigenvalues with positive real parts,
which together with eigenvalues with positive real parts in the damping
matrix drives the system towards asymptotic stability, the support points
$\sigma_{1}, \ldots, \sigma_{r}$ and $\theta_{1}, \ldots, \theta_{r}$
should be chosen to have imaginary parts in the opposite half-plane of
the imaginary parts of $\lambda_{1}, \ldots, \lambda_{r}$, e.g.,
if the interpolation points are chosen over the positive imaginary axis, then
$\sigma_{1}, \ldots, \sigma_{r}$ and $\theta_{1}, \ldots, \theta_{r}$ should
have negative imaginary parts.
In the case of \Cref{alg:LoewnerD} and the interpolation points
$\lambda_{1}, \ldots, \lambda_{r}$ to be on the imaginary axis, the 
support points $\theta_{1}, \ldots, \theta_{r}$ can be chosen such that
$\bhK$ can be realized as a real-valued, symmetric positive definite matrix by
\begin{equation*}
  \begin{aligned}
    \imag(\theta{i}) & = -\left( \imag(\lambda_{i}) + c_{i} \right),
    & \text{for all}~i = 1, \ldots, r,
  \end{aligned}
\end{equation*}
where $c_{i} \in \R$ are real constants with the same sign as the imaginary
part of $\lambda_{i}$ such that $c_{i} \imag(\lambda_{i})) > 0$ holds.


\section{Numerical experiments}%
\label{sec:examples}

In this section, we verify the proposed algorithms and barycentric forms
numerically in different examples.
The experiments reported here have been executed on a machine equipped with an
AMD Ryzen 5 5500U processor running at 2.10\,GHz and with
16\,GB total main memory.
The computer runs on Windows 10 Home version 21H2 (build 19044.2251) and, for
all reported experiments, we use \matlab{} 9.9.0.1592791 (R2020b).
Source codes, data and numerical results are available at~\cite{supWer23a}.


\subsection{Computational setup}

As setup of the subsequent comparative study, we consider the following data
driven, interpolation-based methods:
\begin{description}
  \item[\soloewnerk{}] the linearized $K$-constrained second-order barycentric
    Loewner framework from \Cref{alg:LoewnerK},
  \item[\soloewnerd{}] the linearized $D$-constrained second-order barycentric
    Loewner framework from \Cref{alg:LoewnerD},
  \item[\soloewnerkdo{}] the second-order barycentric Loewner framework with
    zero damping matrix from \Cref{alg:LoewnerKD0},
  \item[\soloewnerr{}] the second-order (matrix) Loewner framework for
    Rayleigh-damped systems from~\cite{PonGB22},
  \item[\loewner{}] the classical first-order barycentric Loewner framework
    based on the results in \Cref{cor:fobary}; see also~\cite{AntA86}.
\end{description}
All models are constructed such that the transfer functions satisfy the same
interpolation conditions.

As interpolation points, we have chosen the local minima and maxima of the given
data samples on the positive part of the imaginary axis using the \matlab{}
functions \texttt{islocalmin} and \texttt{islocalmax} supplemented by the limits
of the considered frequency intervals of interest
$[\i \omega_{\min}, \i \omega_{\max}]$ and,
if necessary, some additional intermediate points.
The interpolation points are then split into the left and right sets by
alternating the ascending order of the positive imaginary parts, i.e.,
\begin{equation*}
  \imag(\lambda_{1}) < \imag(\mu_{1}) <
  \imag(\lambda_{2}) < \imag(\mu_{2}) <\ldots <
  \imag(\lambda_{r}) < \imag(\mu_{r}).
\end{equation*}
For \soloewnerk{} and \soloewnerd{}, the support points are chosen
according to the discussion in \Cref{subsec:linparam}, more specifically as
in~\cref{eqn:choice1}.
The individual choices are outlined in the description of the examples below.
The Rayleigh damping parameters for \soloewnerr{} are either known from the
original model or inferred in an optimal way.
This means that we do not consider the optimization process of these additional
parameters here.

For the comparison of the different methods, we consider the transfer function
magnitude, given as the absolute value
$\lvert H(\i \omega_{k}) \rvert$, in the discrete frequency points
$\i \omega_{k}$ given in the data sets, and the corresponding pointwise relative
errors via
\begin{equation*}
  \relerr(\omega_{k}) :=
    \frac{\lvert H(\i \omega_{k}) - \hH(\i \omega_{k}) \rvert}
    {\lvert H(\i \omega_{k}) \rvert}.
\end{equation*}

\begin{table}[t]
  \centering
  \caption{Dimensions and numbers of data samples for numerical examples.
    The amount of matched interpolation conditions for all constructed
    models is $2r$, due to the partition into left and right data.
    The column ``c.c.d'' denotes the use of complex conjugate interpolation
    points and data.}
  \label{tab:dim}
  
  \vspace{\baselineskip}
  {\renewcommand{\arraystretch}{1.25}%
  \setlength{\tabcolsep}{.8em}
  \begin{tabular}{rrrr}
    \hline
    & 
      \multicolumn{1}{c}{\textbf{full order}} &
      \multicolumn{1}{c}{$\boldsymbol{r}$} &
      \multicolumn{1}{c}{\textbf{c.c.d.}}\\
    \hline\noalign{\medskip}
    Butterfly gyroscope &
      $17\,361$ &
      $14$ &
      \texttt{true}\\
    Artificial fishtail &
      $779\,232$ &
      $6$ &
      \texttt{true}\\
    Flexible aircraft &
      --- &
      $52$ &
      \texttt{true}\\
    \noalign{\smallskip}\hline\noalign{\smallskip}
    Bone model &
      $986\,703$ &
      $19$ &
      \texttt{false} \\
    Hysteretic plate & 
      $201\,900$ &
      $52$ &
      \texttt{false}\\
    \noalign{\medskip}\hline\noalign{\smallskip}
  \end{tabular}}
\end{table}

An overview providing the dimensions of the original full-order systems, the
dimensions of the learned (reduced-order) models and if complex conjugate
data has been used in the computations for the construction of real-valued
system matrices can be found in \Cref{tab:dim}.
The rows of the table are split into the examples with and without damping
matrix.


\subsection{Examples with non-zero damping matrix}

\begin{figure}[t]
  \centering
  \begin{subfigure}[b]{.49\textwidth}
    \centering
  \tikzexternalenable%
  \tikzsetnextfilename{butterfly_tf}%
  \begin{tikzpicture}
  \pgfplotstableread{graphics/data/butterfly_interpts.dat}\tablePTS
  \pgfplotstableread{graphics/data/butterfly_tf.dat}\tableTF
  
  \begin{loglogaxis}[%
    width  = .675\textwidth,
    height = .45\textwidth,
    scale only axis,
    xmin = 1e+2,
    xmax = 1e+6,
    ymin = 1e-8,
    ymax = 5e-2,
    xminorticks = false,
    yminorticks = false,
    xlabel = {frequency $\omega$ (rad/s)},
    ylabel = {magnitude $\lvert H(\i \omega) \rvert$},
    ylabel style   = {yshift = -.3em},
    scaled x ticks = false,
    x tick label style = {/pgf/number format/fixed},
    cycle list name    = plotlist,
    clip mode          = individual]
    
    \pgfplotsset{cycle list shift = 0}
    \addplot table[x index = 0, y index = 1] {\tablePTS};
    
    \pgfplotsset{cycle list shift = -1}
    \addplot table[x index = 2, y index = 3] {\tablePTS};
    
    \foreach \y in {2, 3, 4, 5} {
      \addplot table[x index = 0, y index = \y] {\tableTF};
    }
  \end{loglogaxis}
\end{tikzpicture}%
  \tikzexternaldisable%

    \caption{Transfer function values.}
    \label{fig:butterfly_tf}
  \end{subfigure}%
  \hfill%
  \begin{subfigure}[b]{.49\textwidth}
    \centering
  \tikzexternalenable%
  \tikzsetnextfilename{butterfly_err}%
  \begin{tikzpicture}
  \pgfplotstableread{graphics/data/butterfly_relerr.dat}\tableERR
  
  \begin{loglogaxis}[%
    width  = .675\textwidth,
    height = .45\textwidth,
    scale only axis,
    xmin = 1e+2,
    xmax = 1e+6,
    ymin = 1e-18,
    ymax = 5e-0,
    xminorticks = false,
    yminorticks = false,
    xlabel = {frequency $\omega$ (rad/s)},
    ylabel = {relative error $\relerr(\omega)$},
    ylabel style   = {yshift = -.3em},
    scaled x ticks = false,
    x tick label style = {/pgf/number format/fixed},
    cycle list name    = plotlist,
    clip mode          = individual]
    
    \pgfplotsset{cycle list shift = 1}
    \foreach \y in {1, 2, 3, 4} {
      \addplot table[x index = 0, y index = \y] {\tableERR};
    }
  \end{loglogaxis}
\end{tikzpicture}%
  \tikzexternaldisable%

    \caption{Pointwise relative errors.}
    \label{fig:butterfly_err}
  \end{subfigure}
  
  \vspace{.5\baselineskip}
  \tikzexternalenable%
  \tikzsetnextfilename{butterfly_legend}%
  \begin{tikzpicture}
  \begin{axis}[%
    hide axis,
    width  = 1mm,
    height = 1mm,
    scale only axis,
    xmin = 0,
    xmax = 1,
    ymin = 0,
    ymax = 1,
    legend columns = 3, 
    legend style   = {
      at     = {(0,0)},
      anchor = center,
      /tikz/every even column/.append style = {column sep = 0.4cm}},
    legend cell align  = {left},
    cycle list name    = plotlist,
    clip mode          = individual]
    
    \pgfplotsinvokeforeach{1, ..., 3}{\addplot coordinates {(0,0)};}
    \addlegendentry{Interpolation data}
    \addlegendentry{\soloewnerk{}}
    \addlegendentry{\soloewnerd{}}
    
    \addlegendimage{empty legend}
    \addlegendentry{}
    \pgfplotsinvokeforeach{1, 2}{\addplot coordinates {(0,0)};}
    \addlegendentry{\soloewnerr{}}
    \addlegendentry{\loewner{}}
  \end{axis}
\end{tikzpicture}%
  \tikzexternaldisable%

  \caption{Butterfly gyroscope example: All methods recover reduced-order
    models with similar accuracy using the same amount of given interpolation
    data.
    The model learned by \soloewnerr{} performs slightly better for frequencies
    between $500$ and $5\,000$\,rad/s.}
  \label{fig:butterfly}
\end{figure}
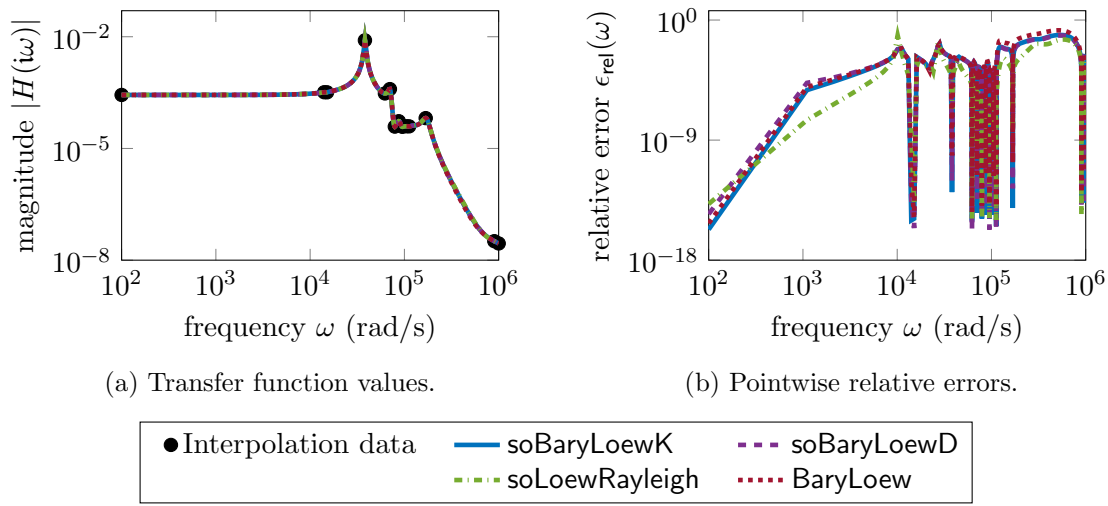

\begin{figure}[t]
  \centering
  \begin{subfigure}[b]{.49\textwidth}
    \centering
  \tikzexternalenable%
  \tikzsetnextfilename{fishtail_tf}%
  \begin{tikzpicture}
  \pgfplotstableread{graphics/data/fishtail_interpts.dat}\tablePTS
  \pgfplotstableread{graphics/data/fishtail_tf.dat}\tableTF
  
  \begin{loglogaxis}[%
    width  = .675\textwidth,
    height = .45\textwidth,
    scale only axis,
    xmin = 1e+0,
    xmax = 1e+3,
    ymin = 1e-8,
    ymax = 1e-3,
    xminorticks = false,
    yminorticks = false,
    xlabel = {frequency $\omega$ (rad/s)},
    ylabel = {magnitude $\lvert H(\i \omega) \rvert$},
    ylabel style   = {yshift = -.3em},
    scaled x ticks = false,
    x tick label style = {/pgf/number format/fixed},
    cycle list name    = plotlist,
    clip mode          = individual]
    
    \pgfplotsset{cycle list shift = 0}
    \addplot table[x index = 0, y index = 1] {\tablePTS};
    
    \pgfplotsset{cycle list shift = -1}
    \addplot table[x index = 2, y index = 3] {\tablePTS};
    
    \foreach \y in {2, 3, 4, 5} {
      \addplot table[x index = 0, y index = \y] {\tableTF};
    }
  \end{loglogaxis}
\end{tikzpicture}%
  \tikzexternaldisable%

    \caption{Transfer function values.}
    \label{fig:fishtail_tf}
  \end{subfigure}%
  \hfill%
  \begin{subfigure}[b]{.49\textwidth}
    \centering
  \tikzexternalenable%
  \tikzsetnextfilename{fishtail_err}%
  \begin{tikzpicture}
  \pgfplotstableread{graphics/data/fishtail_relerr.dat}\tableERR
  
  \begin{loglogaxis}[%
    width  = .675\textwidth,
    height = .45\textwidth,
    scale only axis,
    xmin = 1e+0,
    xmax = 1e+3,
    ymin = 1e-16,
    ymax = 5e-0,
    xminorticks = false,
    yminorticks = false,
    xlabel = {frequency $\omega$ (rad/s)},
    ylabel = {relative error $\relerr(\omega)$},
    ylabel style   = {yshift = -.3em},
    scaled x ticks = false,
    x tick label style = {/pgf/number format/fixed},
    cycle list name    = plotlist,
    clip mode          = individual]
    
    \pgfplotsset{cycle list shift = 1}
    \foreach \y in {1, 2, 3, 4} {
      \addplot table[x index = 0, y index = \y] {\tableERR};
    }
  \end{loglogaxis}
\end{tikzpicture}%
  \tikzexternaldisable%

    \caption{Pointwise relative errors.}
    \label{fig:fishtail_err}
  \end{subfigure}
  
  \vspace{.5\baselineskip}
  \tikzexternalenable%
  \tikzsetnextfilename{fishtail_legend}%
  \begin{tikzpicture}
  \begin{axis}[%
    hide axis,
    width  = 1mm,
    height = 1mm,
    scale only axis,
    xmin = 0,
    xmax = 1,
    ymin = 0,
    ymax = 1,
    legend columns = 3, 
    legend style   = {
      at     = {(0,0)},
      anchor = center,
      /tikz/every even column/.append style = {column sep = 0.4cm}},
    legend cell align  = {left},
    cycle list name    = plotlist,
    clip mode          = individual]
    
    \pgfplotsinvokeforeach{1, ..., 3}{\addplot coordinates {(0,0)};}
    \addlegendentry{Interpolation data}
    \addlegendentry{\soloewnerk{}}
    \addlegendentry{\soloewnerd{}}
    
    \addlegendimage{empty legend}
    \addlegendentry{}
    \pgfplotsinvokeforeach{1, 2}{\addplot coordinates {(0,0)};}
    \addlegendentry{\soloewnerr{}}
    \addlegendentry{\loewner{}}
  \end{axis}
\end{tikzpicture}%
  \tikzexternaldisable%

  \caption{Artificial fishtail example: All methods provide a similar
    approximation accuracy.
    The model inferred by \soloewnerd{} performs insignificantly worse for
    frequencies between $1$ and $100$\,rad/s, while the model from
    \soloewnerr{} keeps the accuracy level also for higher frequencies.}
  \label{fig:fishtail}
\end{figure}
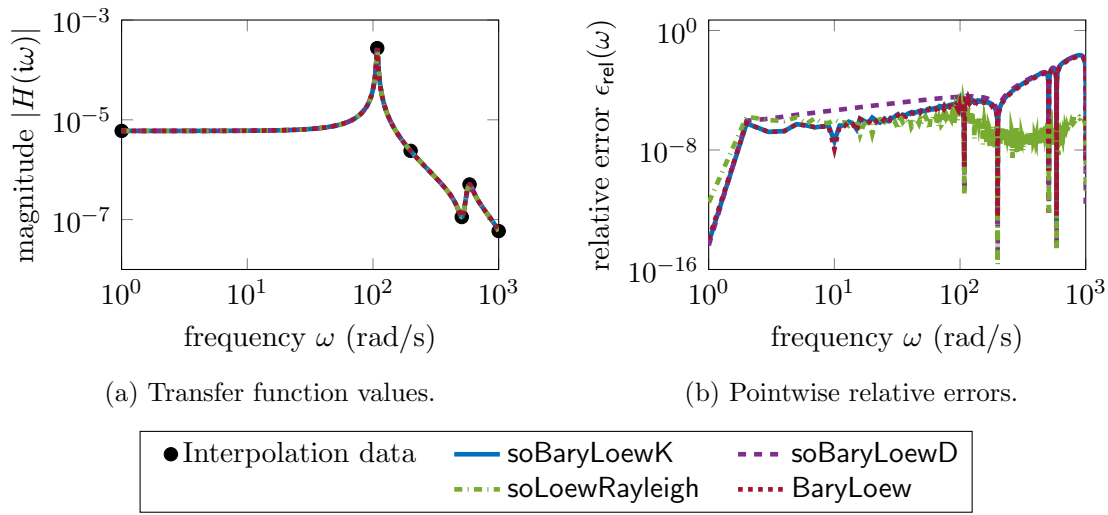

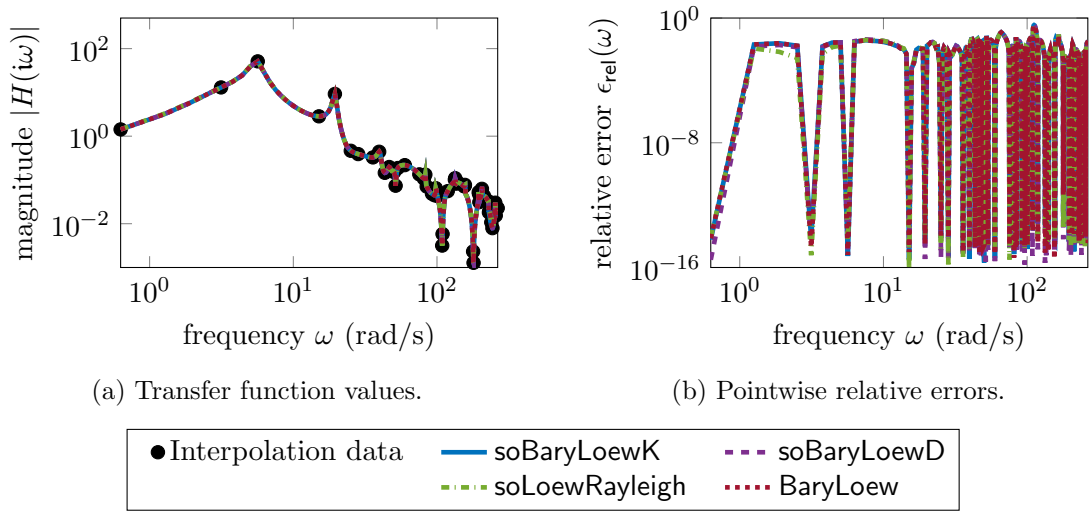
\begin{figure}[t]
  \centering
  \begin{subfigure}[b]{.49\textwidth}
    \centering
  \tikzexternalenable%
  \tikzsetnextfilename{aircraft_tf}%
  \begin{tikzpicture}
  \pgfplotstableread{graphics/data/aircraft_interpts.dat}\tablePTS
  \pgfplotstableread{graphics/data/aircraft_tf.dat}\tableTF
  
  \begin{loglogaxis}[%
    width  = .675\textwidth,
    height = .45\textwidth,
    scale only axis,
    xmin = 6.283185e-1,
    xmax = 2.645221e+2,
    ymin = 1e-3,
    ymax = 5e+2,
    xminorticks = false,
    yminorticks = false,
    xlabel = {frequency $\omega$ (rad/s)},
    ylabel = {magnitude $\lvert H(\i \omega) \rvert$},
    ylabel style   = {yshift = -.3em},
    scaled x ticks = false,
    x tick label style = {/pgf/number format/fixed},
    cycle list name    = plotlist,
    clip mode          = individual]
    
    \pgfplotsset{cycle list shift = 0}
    \addplot table[x index = 0, y index = 1] {\tablePTS};
    
    \pgfplotsset{cycle list shift = -1}
    \addplot table[x index = 2, y index = 3] {\tablePTS};
    
    \foreach \y in {2, 3, 4, 5} {
      \addplot table[x index = 0, y index = \y] {\tableTF};
    }
  \end{loglogaxis}
\end{tikzpicture}%
  \tikzexternaldisable%

    \caption{Transfer function values.}
    \label{fig:aircraft_tf}
  \end{subfigure}%
  \hfill%
  \begin{subfigure}[b]{.49\textwidth}
    \centering
  \tikzexternalenable%
  \tikzsetnextfilename{aircraft_err}%
  \begin{tikzpicture}
  \pgfplotstableread{graphics/data/aircraft_relerr.dat}\tableERR
  
  \begin{loglogaxis}[%
    width  = .675\textwidth,
    height = .45\textwidth,
    scale only axis,
    xmin = 6.283185e-1,
    xmax = 2.645221e+2,
    ymin = 1e-16,
    ymax = 1e-0,
    xminorticks = false,
    yminorticks = false,
    xlabel = {frequency $\omega$ (rad/s)},
    ylabel = {relative error $\relerr(\omega)$},
    ylabel style   = {yshift = -.3em},
    scaled x ticks = false,
    x tick label style = {/pgf/number format/fixed},
    cycle list name    = plotlist,
    clip mode          = individual]
    
    \pgfplotsset{cycle list shift = 1}
    \foreach \y in {1, 2, 3, 4} {
      \addplot table[x index = 0, y index = \y] {\tableERR};
    }
  \end{loglogaxis}
\end{tikzpicture}%
  \tikzexternaldisable%

    \caption{Pointwise relative errors.}
    \label{fig:aircraft_err}
  \end{subfigure}
  
  \vspace{.5\baselineskip}
  \tikzexternalenable%
  \tikzsetnextfilename{aircraft_legend}%
  \begin{tikzpicture}
  \begin{axis}[%
    hide axis,
    width  = 1mm,
    height = 1mm,
    scale only axis,
    xmin = 0,
    xmax = 1,
    ymin = 0,
    ymax = 1,
    legend columns = 3, 
    legend style   = {
      at     = {(0,0)},
      anchor = center,
      /tikz/every even column/.append style = {column sep = 0.4cm}},
    legend cell align  = {left},
    cycle list name    = plotlist,
    clip mode          = individual]
    
    \pgfplotsinvokeforeach{1, ..., 3}{\addplot coordinates {(0,0)};}
    \addlegendentry{Interpolation data}
    \addlegendentry{\soloewnerk{}}
    \addlegendentry{\soloewnerd{}}
    
    \addlegendimage{empty legend}
    \addlegendentry{}
    \pgfplotsinvokeforeach{1, 2}{\addplot coordinates {(0,0)};}
    \addlegendentry{\soloewnerr{}}
    \addlegendentry{\loewner{}}
  \end{axis}
\end{tikzpicture}%
  \tikzexternaldisable%

  \caption{Flexible aircraft example: All methods construct reduced-order models
    that recover the given data set with sufficient accuracy.
    For higher frequencies, more interpolation data is used due to the presence
    of many local maxima and minima.}
  \label{fig:aircraft}
\end{figure}

We first consider the case of models with energy dissipation, which need the
presence of a damping matrix $\bhD$.
Three examples are considered to test the proposed methods.
The butterfly gyroscope models the behavior of a micro-mechanical vibrating
gyroscope structure for the use in inertia-based navigation
systems~\cite{Bil05, morwiki_gyro}.
The artificial fishtail models the deformation of a silicon structure in the
shape of a fishtail used in the construction of underwater vehicles with
fish-like locomotion~\cite{SieKM18, SieKMetal19}.
Lastly, we have sampled data from a high-fidelity simulation of a flexible
aircraft model used in civil aeronautics to optimize lightweight
structures~\cite{PouQV18, morwiki_aircraft}.
The dimensions of the sampled models and the dimension of the constructed
second-order models are shown in \Cref{tab:dim}.
Note that we consider here SISO versions of these examples, which are originally
single-input/multi-output.

The results computed by the different methods are shown in
\Cref{fig:butterfly,fig:fishtail,fig:aircraft}, where we have set the
support points as
\begin{equation*}
  \sigma_{1} = \ldots = \sigma_{r} = \theta_{1} = \ldots = \theta_{r}
  = -(5 + 10^{-3}\i) \cdot \omega_{\max}
\end{equation*}
for the butterfly gyroscope and flexible aircraft, and
\begin{equation*}
  \sigma_{1} = \ldots = \sigma_{r} = \theta_{1} = \ldots = \theta_{r}
  = -(5 + 10^{-5}\i) \cdot \omega_{\max}
\end{equation*}
in the case of the fishtail example, where $\omega_{\max} \in \R$ is the upper
limit of the considered frequency interval.
The figures show the transfer function magnitudes of the constructed models
with the used interpolation data and the pointwise relative errors computed
with respected to all given data samples.
In the case of the butterfly and fishtail examples, these are $1\,000$
samples in the frequency range of interest and $421$ samples for the flexible
aircraft example.
For all three examples, the considered methods perform similarly well in terms
of the pointwise relative errors shown in 
\Cref{fig:butterfly,fig:fishtail,fig:aircraft}.
However, we can note that the learned models assume different spectral
properties.
In the case of the butterfly gyroscope, the proposed methods \soloewnerk{} and
\soloewnerd{} produce asymptotically stable reduced-order models due to the
choice of support points, in contrast to the classical \loewner{}, which has
one unstable pole.
The Rayleigh-damped approach \soloewnerr{} gives one unstable and one
infinite eigenvalue, where the infinite one likely results from the finite
arithmetic in the eigenvalue computations and the highly ill-conditioned
learned system matrices.
For the fishtail example, all computed reduced-order models are stable
and for the aircraft example, no reduced-order model is stable.
In particular, \soloewnerd{}, \soloewnerr{} and \loewner{} have three pairs of
unstable complex conjugate eigenvalues, while \soloewnerk{} has only two pairs.


\subsection{Examples with zero damping matrix}

\begin{figure}[t]
  \centering
  \begin{subfigure}[b]{.49\textwidth}
    \centering
  \tikzexternalenable%
  \tikzsetnextfilename{bonemodel_tf}%
  \begin{tikzpicture}
  \pgfplotstableread{graphics/data/bonemodel_interpts.dat}\tablePTS
  \pgfplotstableread{graphics/data/bonemodel_tf.dat}\tableTF
  
  \begin{loglogaxis}[%
    width  = .675\textwidth,
    height = .45\textwidth,
    scale only axis,
    xmin = 6.283185e+0,
    xmax = 6.283185e+2,
    ymin = 1e-6,
    ymax = 1e+0,
    xminorticks = false,
    yminorticks = false,
    xlabel = {frequency $\omega$ (rad/s)},
    ylabel = {magnitude $\lvert H(\i \omega) \rvert$},
    ylabel style   = {yshift = -.3em},
    scaled x ticks = false,
    x tick label style = {/pgf/number format/fixed},
    cycle list name    = plotlist2,
    clip mode          = individual]
    
    \pgfplotsset{cycle list shift = 0}
    \addplot table[x index = 0, y index = 1] {\tablePTS};
    
    \pgfplotsset{cycle list shift = -1}
    \addplot table[x index = 2, y index = 3] {\tablePTS};
    
    \foreach \y in {2, 3, 4} {
      \addplot table[x index = 0, y index = \y] {\tableTF};
    }
  \end{loglogaxis}
\end{tikzpicture}%
  \tikzexternaldisable%

    \caption{Transfer function values.}
    \label{fig:bonemodel_tf}
  \end{subfigure}%
  \hfill%
  \begin{subfigure}[b]{.49\textwidth}
    \centering
  \tikzexternalenable%
  \tikzsetnextfilename{bonemodel_err}%
  \begin{tikzpicture}
  \pgfplotstableread{graphics/data/bonemodel_relerr.dat}\tableERR
  
  \begin{loglogaxis}[%
    width  = .675\textwidth,
    height = .45\textwidth,
    scale only axis,
    xmin = 6.283185e+0,
    xmax = 6.283185e+2,
    ymin = 1e-16,
    ymax = 1e-1,
    xminorticks = false,
    yminorticks = false,
    xlabel = {frequency $\omega$ (rad/s)},
    ylabel = {relative error $\relerr(\omega)$},
    ylabel style   = {yshift = -.3em},
    scaled x ticks = false,
    x tick label style = {/pgf/number format/fixed},
    cycle list name    = plotlist2,
    clip mode          = individual]
    
    \pgfplotsset{cycle list shift = 1}
    \foreach \y in {1, 2, 3} {
      \addplot table[x index = 0, y index = \y] {\tableERR};
    }
  \end{loglogaxis}
\end{tikzpicture}%
  \tikzexternaldisable%

    \caption{Pointwise relative errors.}
    \label{fig:bonemodel_err}
  \end{subfigure}
  
  \vspace{.5\baselineskip}
  \tikzexternalenable%
  \tikzsetnextfilename{bonemodel_legend}%
  \begin{tikzpicture}
  \begin{axis}[%
    hide axis,
    width  = 1mm,
    height = 1mm,
    scale only axis,
    xmin = 0,
    xmax = 1,
    ymin = 0,
    ymax = 1,
    legend columns = 4, 
    legend style   = {
      at     = {(0,0)},
      anchor = center,
      /tikz/every even column/.append style = {column sep = 0.4cm}},
    legend cell align  = {left},
    cycle list name    = plotlist2,
    clip mode          = individual]
    
    \pgfplotsinvokeforeach{1, ..., 4}{\addplot coordinates {(0,0)};}
    \addlegendentry{Interpolation data}
    \addlegendentry{\soloewnerkdo{}}
    \addlegendentry{\soloewnerr{}}
    \addlegendentry{\loewner{}}
  \end{axis}
\end{tikzpicture}%
  \tikzexternaldisable%

  \caption{Bone model example: For lower frequencies, the second-order methods
    produce models with at least one order of magnitude smaller relative errors
    than the classical Loewner framework.
    The curves of \soloewnerkdo{} and \soloewnerr{} are identical up to
    numerical round-off errors.}
  \label{fig:bonemodel}
\end{figure}
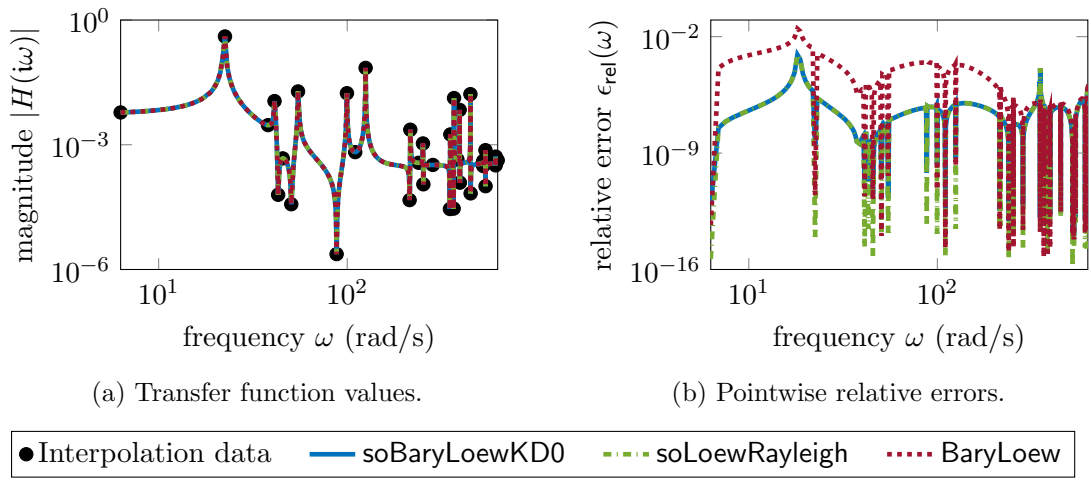

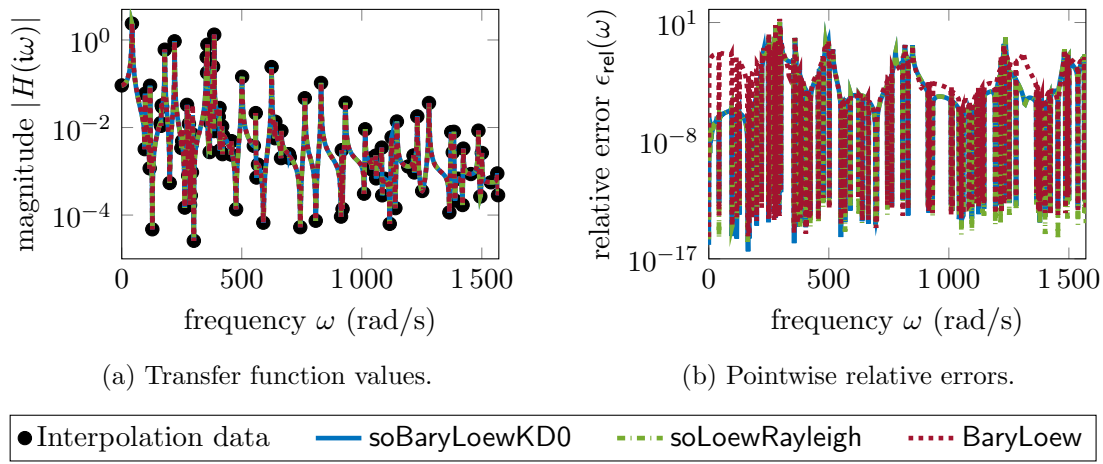
\begin{figure}[t]
  \centering
  \begin{subfigure}[b]{.49\textwidth}
    \centering
  \tikzexternalenable%
  \tikzsetnextfilename{plate_tf}%
  \begin{tikzpicture}
  \pgfplotstableread{graphics/data/plate_interpts.dat}\tablePTS
  \pgfplotstableread{graphics/data/plate_tf.dat}\tableTF
  
  \begin{semilogyaxis}[%
    width  = .675\textwidth,
    height = .45\textwidth,
    scale only axis,
    xmin = 0,
    xmax = 1.570796e+03,
    ymin = 1e-5,
    ymax = 5e+0,
    xminorticks = false,
    yminorticks = false,
    xlabel = {frequency $\omega$ (rad/s)},
    ylabel = {magnitude $\lvert H(\i \omega) \rvert$},
    ylabel style   = {yshift = -.3em},
    scaled x ticks = false,
    x tick label style = {/pgf/number format/fixed},
    cycle list name    = plotlist2,
    clip mode          = individual]
    
    \pgfplotsset{cycle list shift = 0}
    \addplot table[x index = 0, y index = 1] {\tablePTS};
    
    \pgfplotsset{cycle list shift = -1}
    \addplot table[x index = 2, y index = 3] {\tablePTS};
    
    \foreach \y in {2, 3, 4} {
      \addplot table[x index = 0, y index = \y] {\tableTF};
    }
  \end{semilogyaxis}
\end{tikzpicture}%
  \tikzexternaldisable%

    \caption{Transfer function values.}
    \label{fig:plate_tf}
  \end{subfigure}%
  \hfill%
  \begin{subfigure}[b]{.49\textwidth}
    \centering
  \tikzexternalenable%
  \tikzsetnextfilename{plate_err}%
  \begin{tikzpicture}
  \pgfplotstableread{graphics/data/plate_relerr.dat}\tableERR
  
  \begin{semilogyaxis}[%
    width  = .675\textwidth,
    height = .45\textwidth,
    scale only axis,
    xmin = 0,
    xmax = 1.570796e+03,
    ymin = 1e-17,
    ymax = 1e+2,
    xminorticks = false,
    yminorticks = false,
    xlabel = {frequency $\omega$ (rad/s)},
    ylabel = {relative error $\relerr(\omega)$},
    ylabel style   = {yshift = -.3em},
    scaled x ticks = false,
    x tick label style = {/pgf/number format/fixed},
    cycle list name    = plotlist2,
    clip mode          = individual]
    
    \pgfplotsset{cycle list shift = 1}
    \foreach \y in {1, 2, 3} {
      \addplot table[x index = 0, y index = \y] {\tableERR};
    }
  \end{semilogyaxis}
\end{tikzpicture}%
  \tikzexternaldisable%

    \caption{Pointwise relative errors.}
    \label{fig:plate_err}
  \end{subfigure}
  
  \vspace{.5\baselineskip}
  \tikzexternalenable%
  \tikzsetnextfilename{plate_legend}%
  \begin{tikzpicture}
  \begin{axis}[%
    hide axis,
    width  = 1mm,
    height = 1mm,
    scale only axis,
    xmin = 0,
    xmax = 1,
    ymin = 0,
    ymax = 1,
    legend columns = 4, 
    legend style   = {
      at     = {(0,0)},
      anchor = center,
      /tikz/every even column/.append style = {column sep = 0.5cm}},
    legend cell align  = {left},
    cycle list name    = plotlist2,
    clip mode          = individual]
    
    \pgfplotsinvokeforeach{1, ..., 4}{\addplot coordinates {(0,0)};}
    \addlegendentry{Interpolation data}
    \addlegendentry{\soloewnerkdo{}}
    \addlegendentry{\soloewnerr{}}
    \addlegendentry{\loewner{}}
  \end{axis}
\end{tikzpicture}%
  \tikzexternaldisable%

  \caption{Hysteretic plate example: For up to $50$\,rad/s, the second-order
    methods \soloewnerkdo{} and \soloewnerr{} produce relative errors that are
    at least four orders of magnitude smaller than the classical \loewner{}.
    The curves of \soloewnerkdo{} and \soloewnerr{} are identical up to
    numerical round-off.}
  \label{fig:plate}
\end{figure}

Now, we consider two examples with zero damping matrix, in order to test
\soloewnerkdo{}.
First, we have the bone model as a conservative system, which is used to
simulate the porous bone micro-architecture in studies of bone tissue under
different loads~\cite{VanWHetal95, morwiki_bone}.
As a second example, we consider the model of a vibrating plate that is equipped
with tuned vibration absorbers, which lead to hysteretic structural
damping~\cite{AumW23, supAumW22a}.
The results of the different methods can be seen in
\Cref{fig:bonemodel,fig:plate}.
In both examples, the second-order methods \soloewnerkdo{} and \soloewnerr{}
perform better in terms of the pointwise relative errors than the classical
\loewner{}.
This comes from the additional preservation of the damping model in these
methods.
In particular, we can observe that in the absence of any type of energy
dissipation, the two methods that preserve the zero damping structure outperform
the classical Loewner framework by several orders of magnitude.

Additionally, we note that the curves of \soloewnerkdo{} and
\soloewnerr{} are in fact identical in both examples.
This is a numerical verification that the barycentric form in
\Cref{cor:barytfKD0} describes exactly the same system that is recovered by
\soloewnerr{} with zero Rayleigh damping parameters, i.e., both methods
construct different realizations of exactly the same interpolatory second-order
systems.

  
\section{Conclusions}%
\label{sec:conclusions}

We have developed new structured barycentric forms for the transfer functions
of second-order systems for data-driven, interpolatory reduced-order
modeling.
Based on these barycentric forms, we have proposed three Loewner-like
algorithms for the construction of second-order systems from data.
Numerical experiments compared these new methods to the classical, unstructured
Loewner approach as well as to another Loewner-like method for the construction
of second-order systems from frequency domain data.
In all examples, the new structured approaches were able to provide a similar ,
and in some cases significantly better, approximation accuracy as the
established methods from the literature some of which do not obey to preserve
the structure.
Since the proposed algorithms rely on some fixed parameter choices to simplify
computations, we expect that including these additional ``support points'' as
parameters would significantly increase the approximation capabilities
of methods based on the presented structured barycentric forms.
However, we leave these considerations for future work.
Additionally, we have observed that these support points can be used to alter
the properties of the constructed system matrices, allowing, for example, the
construction of asymptotically stable second-order systems.

At the heart of this work are the new structured barycentric forms that allow
for a large bandwidth of new algorithms for learning structured models from
frequency domain data.
For the clarity of the presentation, we restricted the analysis in this work to
a purely interpolatory framework.
However, the use of the free parameters in the barycentric forms for
least-squares fitting will allow the derivation of methods like vector
fitting~\cite{GusS99} and AAA~\cite{NakST18} for second-order systems.
In particular, the presence of more parameters than in the unstructured,
first-order system case gives rise to a lot more variety in resulting
algorithms.
We will consider these ideas in future work.


\section*{Acknowledgments}%
\addcontentsline{toc}{section}{Acknowledgments}

The work of Gugercin is based upon work supported by the National Science
Foundation under Grant No. AMPS-1923221.


\addcontentsline{toc}{section}{References}
\bibliographystyle{plainurl}
\bibliography{bibtex/myref}
  
\end{document}